%
%

\documentclass[11pt,a4paper,twoside]{amsart}

\usepackage{amsfonts, amsmath,amssymb}
\usepackage{hyperref}
\usepackage[all]{xy}
\usepackage[lite]{amsrefs}
\usepackage{amsthm}
\usepackage{geometry}
\usepackage{graphicx}
\usepackage{wrapfig}
\usepackage{subfig}
\usepackage{appendix}
\usepackage{pstricks, pstricks-add}

\textwidth=38pc
\textheight=55pc
\oddsidemargin=-12pt
\evensidemargin=-12pt

\hyphenation{Ver-dier}

\newcommand{\aTop}[2]{\begin{array}{c}{#1}\\{#2}\end{array}}
\newcommand{\Hy}{\mathcal{H}}
\newcommand{\ringO}{\mathcal{O}}

\newcommand{\C}{{\mathbb{C}}}

\newcommand{\Z}{{\mathbb{Z}}}

\newcommand{\N}{{\mathbb{N}}}
\newcommand{\rationals}{{\mathbb{Q}}}

\newcommand*{\Homol}{\operatorname{H}}

\newcommand{\PSO}{\mathrm{PSO}}

\renewcommand{\leq}{\leqslant}

\newcommand{\Afour}{\mathcal{A}_4}
\newcommand{\Sthree}{\mathcal{D}_3}
\newcommand{\Kleinfourgroup}{\mathcal{D}_2}
\newcommand{\Betti}{\Z^{\beta_1}}

\theoremstyle{plain}
\newtheorem{thm}{\bfseries Theorem}
\newtheorem{theorem}[thm]{\bfseries Theorem}
\newtheorem{Lem}[thm]{\bfseries Lemma}
\newtheorem{lemma}[thm]{\bfseries Lemma}

\newtheorem{proposition}[thm]{\bfseries Proposition}

\newtheorem{corollary}[thm]{\bfseries Corollary}

\newtheorem{df}[thm]{\bfseries Definition}

\theoremstyle{remark}

\newtheorem{remark}[thm]{\bfseries Remark}

\newtheorem*{ConditionA}{\bfseries Condition A}
\newtheorem*{ConditionB}{\bfseries Condition B}
\newcommand{\cellCondition}{A}
\newcommand{\weakerCondition}{B}

\newcommand{\circlegraph}{ 
\begin{pspicture}     (-0.21,-0.155)(0.21,0.155) 
             \pscircle(0,0.0){0.15}
             \psdots(0.15,0.0)
\end{pspicture} }
 
\newcommand{\edgegraph}{ 
\begin{pspicture}(-0.3,-0.1)(0.3,0.3)
\psdots(-0.2,0.0)
\psdots(0.2,0.0)
\psline(-0.2,0.0)(0.2,0.0)
\end{pspicture} }

\newcommand{\graphFive}{  
\begin{pspicture}(-0.21,-0.155)(0.21,0.155)
\pscircle(0,0.0){0.15}
\psdots(-0.15,0)
\psdots(0.15,0)
\psline(-0.15,0)(0.15,0)
\end{pspicture} }

\newcommand{\graphTwo}{  
\begin{pspicture}(0,0.05)(0.8,0.4)
\pscircle(0.2,0.2){0.15}
\psdots(0.35,0.2)
\psline(0.35,0.2)(0.65,0.2)
\psdots(0.65,0.2)
\end{pspicture} }

\begin{document}

\title[On the equivariant $K$-homology of PSL$_2$ of the imaginary quadratic integers]{On the equivariant $K$-homology \\ of PSL$_2$ of the imaginary quadratic integers}
\author{Alexander D. Rahm}
\date{\today}
\subjclass[2010]{55N91, Equivariant homology and cohomology.\\
19L47, Equivariant $K$-theory.
}

\begin{abstract}
 We establish formulae for the part due to torsion of the equivariant $K$-homology of all the Bianchi groups 
 (PSL$_2$ of the imaginary quadratic integers), 
 in terms of elementary number-theoretic quantities.
To achieve this, we introduce a novel technique in the computation of Bredon homology: \emph{representation ring splitting}, 
which allows us to adapt the recent technique of torsion subcomplex reduction from group homology to Bredon homology.
\end{abstract}

\maketitle

\section*{Introduction}
Let $\ringO_{-m}$ be the ring of algebraic integers in the imaginary quadratic number field $\rationals(\sqrt{-m)}\thinspace)$;
the \textit{Bianchi groups} are the projective special linear groups PSL$_2(\ringO_{-m})$.
In this paper, we establish formulae for the part due to torsion of the Bredon homology of the Bianchi groups,
with respect to the family $\mathfrak{Fin}$ of finite subgroups and coefficients in the complex representation ring $R_\C$,
from which we deduce their equivariant $K$-homology.
Then we use the fact that 
the Baum--Connes assembly map from the equivariant $K$-homology to the 
$K$-theory of the reduced $C^*$-algebras of the Bianchi groups is an isomorphism.
This has been inspired by works of Sanchez-Garcia \cites{Sanchez-Garcia, Sanchez-Garcia_Coxeter},
and allows us to obtain the isomorphism type of the latter operator $K$-theory, which would be extremely hard to compute from the reduced $C^*$-algebras. 
Case-by-case computations on the machine have already been carried out for the equivariant $K$-homology of the Bianchi groups~\cite{noteAuxCRAS}
(see~\cite{Fuchs} for a way to extend them to non-trivial class group cases, alternative to the current implementation by the author),
but by their nature, they can of course only cover a small finite collection of Bianchi groups. 
In order to obtain the desired formulae for all Bianchi groups, we set up an adaptation to Bredon homology of \emph{torsion subcomplex reduction}, 
a technique which has recently been formulated for group homology~\cite{AccessingFarrell},
and some elements of which had already been used earlier on as ad hoc tricks by Soul\'e~\cite{Soule}.
A priori, it is possible with our methods to treat any discrete group with a nice action on a cell complex;
and another class of examples for this is work in advanced progress jointly with Lafont, Ortiz and Sanchez-Garcia~\cite{LORS},
namely hyperbolic Coxeter groups, many of which are not arithmetic.
Please note that definitions of Bredon homology and equivariant $K$-homology are given in~\cite{MislinValette}, 
\cite{Sanchez-Garcia} and~\cite{Sanchez-Garcia_Coxeter}, 
so we will not recall them in this paper.

\section{Statement of the results}
Denote by $\underbar{E}\Gamma$ the classifying space for proper actions of $\Gamma$,
and denote by $\underbar{B}\Gamma := \Gamma\backslash\underbar{E}\Gamma$ the orbit space.
\begin{theorem} \label{splitting}
Let $\Gamma$ be a Bianchi group or one of their subgroups.
 Then the Bredon homology $\Homol^\mathfrak{Fin}_n(\Gamma; \thinspace R_\C)$ 
 is concentrated in degrees $n \in \{0, 1, 2\}$ and splits as a direct sum over 
\begin{enumerate}
 \item the orbit space homology $\Homol_n(\underbar{\rm B}\Gamma; \thinspace \Z)$, 
 \item a submodule $\Homol_n(\Psi_\bullet^{(2)})$ determined by a reduced $2$--torsion subcomplex for $(\underline{\rm E}\Gamma, \Gamma)$
 \item and a submodule $\Homol_n(\Psi_\bullet^{(3)})$ determined by a reduced $3$--torsion subcomplex for $(\underline{\rm E}\Gamma, \Gamma)$.
\end{enumerate}
\end{theorem}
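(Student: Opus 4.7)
The plan is to construct the splitting directly at the level of the Bredon chain complex
\[
C_\bullet^{\mathfrak{Fin}}(\Gamma;\thinspace R_\C) \;=\; \bigoplus_\sigma R_\C(\Gamma_\sigma),
\]
where $\sigma$ ranges over $\Gamma$-orbit representatives of cells in $\underbar{E}\Gamma$ and $\Gamma_\sigma$ denotes the stabiliser. The essential input is the classification of finite subgroups of the Bianchi groups: up to conjugacy these are among $\{1\}, \Z/2, \Z/3, \Kleinfourgroup, \Sthree, \Afour$, and so every stabiliser has order dividing $12 = 2^2\cdot 3$.

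The first and main step is the \emph{representation ring splitting} alluded to in the abstract. For each such finite $H$ I would exhibit a canonical $\Z$-module decomposition
\[
R_\C(H) \;=\; \Z\cdot[\mathbf{1}_H] \;\oplus\; R_\C^{(2)}(H) \;\oplus\; R_\C^{(3)}(H),
\]
in which the first summand is spanned by the trivial character, $R_\C^{(2)}(H)$ is generated by those virtual characters whose restriction to any Sylow $3$-subgroup is a multiple of the trivial character, and symmetrically for $R_\C^{(3)}(H)$. For the $2$-groups $\{1\},\Z/2,\Kleinfourgroup$ the $3$-part is zero, for $\Z/3$ the $2$-part is zero, and for the mixed groups $\Sthree$ and $\Afour$ an inspection of the character tables produces integral generators for each piece. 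The critical compatibility to verify is that this splitting is natural with respect to the induction and restriction maps entering the Bredon differential, for every pair $H' \subseteq H$ of finite subgroups that appears along a cell-face inclusion in $\underbar{E}\Gamma$; this is a finite case-check.

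Once this splitting is established, the Bredon chain complex decomposes as a direct sum of three subcomplexes. The subcomplex built from the $\Z\cdot[\mathbf{1}_H]$ summands coincides with the cellular chain complex of $\underbar{B}\Gamma$ with integer coefficients, yielding contribution~(1). The $R_\C^{(p)}$-summands vanish on cells whose stabiliser has order coprime to $p$, so they are supported on the $p$-torsion subcomplex of $\underbar{E}\Gamma$; applying the Bredon analogue of torsion subcomplex reduction (which works summand-wise by naturality of the splitting) collapses each into the chain complex of a reduced $p$-torsion subcomplex $\Psi_\bullet^{(p)}$, giving (2) and (3).

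Degree concentration then comes from two geometric facts. The orbit space $\underbar{B}\Gamma = \Gamma\backslash\Hy^3$ is a non-compact connected $3$-orbifold, so $\Homol_n(\underbar{B}\Gamma; \Z)=0$ for $n\geq 3$. Each $p$-torsion subcomplex is at most one-dimensional, since the fixed-point locus in $\Hy^3$ of a non-trivial cyclic rotation subgroup is a geodesic, and for any larger finite subgroup it is a point; hence $\Homol_n(\Psi_\bullet^{(p)})=0$ for $n\geq 2$. Combined, the homology lives only in degrees $0,1,2$. I expect the hard part to be verifying the naturality of the representation ring splitting across all the induction maps that occur: the decomposition on a single group is elementary from character theory, but ensuring that the integral splitting is respected by every cell-face inclusion---especially those involving $\Sthree$ and $\Afour$---is the decisive computational step on which the whole approach rests.
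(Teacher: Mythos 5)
Your overall architecture --- split each $R_\C(\Gamma_\sigma)$ integrally into a rank-one piece plus a $2$-part and a $3$-part, check compatibility with every stabiliser inclusion occurring in $\underbar{\rm E}\Gamma$, and read off three subcomplexes --- is exactly the strategy of the paper (Lemma~\ref{representation ring splitting}). But the specific splitting you write down fails at the step you yourself single out as decisive. The Bredon differential is assembled from the \emph{induction} maps $R_\C(H)\to R_\C(K)$ for $H\leq K$, and induction does not carry trivial characters to trivial characters: $\mathrm{Ind}_{\{1\}}^{\Z/n}\mathbf{1}$ is the regular representation, and $\mathrm{Ind}_{\Z/2}^{\Afour}\mathbf{1}=\chi_1+\chi_2+\chi_3+\chi_4$ is the $6$-dimensional permutation character, not a multiple of $\chi_1$. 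Consequently the sum of your summands $\Z\cdot[\mathbf{1}_H]$ is not a subcomplex; in particular the image of $\Psi_2$, which is generated by the regular representations of the cyclic edge stabilisers, cannot lie in the span of the trivial characters, so the claimed identification of the first summand with $C_\bullet(\underbar{\rm B}\Gamma;\Z)$ already breaks for $\Psi_2$. The paper's lemma circumvents precisely this point by taking as rank-one generator the \emph{regular} representation at the cyclic (edge) stabilisers and the trivial representation only at the non-cyclic vertex stabilisers $\Kleinfourgroup$, $\Sthree$, $\Afour$, together with carefully chosen integral complements ($\rho_2$; $\sigma_2,\sigma_3$; $\xi_i-\xi_1$; $\pi_2-\pi_1$, $\pi_3-\pi_2-\pi_1$; $\chi_2-\chi_1-\chi_3-\chi_4$, $\chi_i-\chi_1$), and then verifies block-diagonality by explicitly computing every induced matrix via Frobenius reciprocity. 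That asymmetric choice of generators is not a cosmetic detail; it is the idea that makes the splitting exist.

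A secondary defect: your description of $R_\C^{(2)}(H)$ as the virtual characters whose restriction to every Sylow $3$-subgroup is a multiple of the trivial character does not define a complement of $\Z\cdot[\mathbf{1}_H]$ --- for $H$ a $2$-group the condition is vacuous and returns all of $R_\C(H)$ --- so the decomposition is not well defined even before naturality is addressed. The remaining parts of your outline (support of the $\ell$-parts on the $\ell$-torsion subcomplexes, one-dimensionality of those subcomplexes, vanishing of $\Homol_n(\underbar{\rm B}\Gamma;\Z)$ for $n\geq 3$) are sound and agree with the paper, which simply uses a $2$-dimensional cocompact model to get the degree concentration for free.
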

These submodules are given as follows, except for PSL$_2$ over the Gaussian and Eisensteinian integers.
The additional units in these two rings induce some particularities which we would like to avoid.
This does not cause any harm, because the Bredon homology and equivariant $K$-homology of these two Bianchi groups have already been computed~\cite{noteAuxCRAS}.
So for the remainder of this article, the term ``Bianchi group'' will stand for PSL$_2$ over a ring of imaginary quadratic integers excluding these two.

\begin{theorem} \label{2}
 The part due to $2$-torsion of the Bredon complex of a Bianchi group $\Gamma$ has homology
 \begin{center}
  $\Homol_n(\Psi_\bullet^{(2)}) \cong \begin{cases}
                                      \Z^{z_2}\oplus (\Z/2)^\frac{d_2}{2},& n = 0,\\
                                      \Z^{o_2},& n = 1,\\
                                      0,&\text{\rm otherwise},
                                     \end{cases}
  $
 \end{center}
where $z_2$ counts the number of conjugacy classes of subgroups of type $\Z/2$ in $\Gamma$,
$o_2$ counts the conjugacy classes of those of them which are not contained in any $2$-dihedral subgroup,
and $d_2$ counts the number of $2$-dihedral subgroups, whether or not they are contained in a tetrahedral subgroup of $\Gamma$.
\end{theorem}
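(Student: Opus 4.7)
I would compute $H_\bullet(\Psi_\bullet^{(2)})$ by direct analysis of the reduced $2$-torsion subcomplex for $(\underline{\rm E}\Gamma, \Gamma)$. The first step is to enumerate the finite subgroups of $\Gamma$ of even order that can arise as cell stabilisers: outside the Gaussian and Eisensteinian cases, these are exhausted by $\Z/2$, the Klein four group $\Kleinfourgroup$, the symmetric group $\Sthree$, and the tetrahedral group $\Afour$. Since $\underline{\rm E}\Gamma$ admits a two-dimensional equivariant deformation retract (the Bianchi spine in hyperbolic $3$-space), the $2$-torsion locus has dimension at most $1$, and torsion subcomplex reduction in the sense of \cite{AccessingFarrell} produces a graph whose edges are $\Z/2$-stabilised and whose vertices are stabilised by one of $\Kleinfourgroup$, $\Sthree$ or $\Afour$. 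This already implies $H_n(\Psi_\bullet^{(2)}) = 0$ for $n \geq 2$, which is one third of the statement.

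Next I apply the representation ring splitting underlying Theorem~\ref{splitting}: for each stabiliser $H$, $R_\C(H)$ decomposes as a direct sum over its rational blocks, and the genuine $2$-torsion contribution to the Bredon complex is concentrated on specific blocks of these $R_\C(H)$. After restricting $\Psi_\bullet^{(2)}$ to these blocks each chain module collapses to a single copy of $\Z$ per orbit cell, so the complex becomes an integral chain complex with $C_1 \cong \Z^{z_2}$ and $C_0 \cong \Z^{v(\Kleinfourgroup)} \oplus \Z^{v(\Sthree)} \oplus \Z^{v(\Afour)}$, where $v(H)$ denotes the number of vertex orbits of stabiliser $H$ in the reduced complex. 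The differential is the induction map $R_\C(\Z/2) \to R_\C(H)$, projected to the relevant blocks.

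A block-by-block Frobenius computation then shows: for $H = \Sthree$ the induction is unimodular on the relevant block, so each $\Sthree$-vertex imposes a unimodular relation collapsing one $\Z$ in $H_0$ and contributing no cycle to $H_1$; for $H = \Afour$ the induction factors through the normal $\Kleinfourgroup \trianglelefteq \Afour$, so after consolidation with the neighbouring $\Kleinfourgroup$-data it again yields a unimodular contribution (which is precisely why the theorem's count $d_2$ must include Klein four subgroups even when contained in a tetrahedral one); for $H = \Kleinfourgroup$ the three $\Z/2$-subgroups induce with determinant $\pm 2$, yielding $\Z/2$ as a local cokernel per Klein four vertex orbit. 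Assembling these contributions gives $H_1 \cong \Z^{o_2}$ (cycles correspond to $\Z/2$-edges avoiding any Klein four vertex, which translates to $\Z/2$-subgroups contained in no Klein four subgroup of $\Gamma$), the free part $\Z^{z_2}$ of $H_0$ from the surviving edge generators after the unimodular identifications at $\Sthree$- and $\Afour$-vertices, and the torsion $(\Z/2)^{d_2/2}$ from the cokernels at the Klein four vertices.

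The main obstacle I expect is the verification of the exponent $d_2/2$: this reduces to showing rigorously that $v(\Kleinfourgroup) = d_2/2$, i.e.\ that each conjugacy class of Klein four subgroups of $\Gamma$ fixes exactly two vertex orbits of the reduced $2$-torsion subcomplex, with the Klein four subgroups sitting inside an $\Afour$-subgroup counted correctly. This is a uniform combinatorial statement about the Bianchi fundamental polyhedron and is the only step that does not follow mechanically from Frobenius reciprocity; a secondary obstacle is matching the number of $\Z/2$-edge orbits after reduction with the number $z_2$ of conjugacy classes of $\Z/2$-subgroups in $\Gamma$, which requires verifying that the reduction collapses any higher multiplicities.
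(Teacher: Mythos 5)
Your overall strategy (reduce the $2$--torsion subcomplex to a graph with $\Z/2$--edges and vertices of type $\Kleinfourgroup$, $\Sthree$, $\Afour$, then compute the split Bredon differential block by block via Frobenius reciprocity) is the same as the paper's, but two of your block computations are wrong, and they are exactly the ones that produce the torsion. First, the $2$--torsion block of $R_\C(\Afour)$ has rank one and the restriction map $R_\C^{(2)}(\Afour)\to R_\C^{(2)}(\Z/2)$ is given by the scalar $2$ (the character $\chi_2-\chi_1-\chi_3-\chi_4$ takes the value $-4$ on $(12)(34)$, and $\tfrac12\cdot 4=2$), so the contribution of an $\Afour$--vertex is \emph{not} unimodular and does not ``consolidate away'' into neighbouring Klein four data. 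On a connected component consisting of a single edge with two $\Afour$--endpoints (e.g.\ $m=11,19,43,67,163$) there are no $\Kleinfourgroup$--stabilised vertices at all, yet the differential is $x\mapsto(-2x,2x)$ with cokernel $\Z\oplus\Z/2$; your accounting, which locates all torsion at Klein-four-stabilised vertex orbits, predicts no torsion there and contradicts both the theorem and the machine data in the appendix. The correct bookkeeping is: one $\Z/2$ per connected component of the reduced quotient that is not a single circle, and each such component carries exactly two conjugacy classes of $\Kleinfourgroup$--subgroups (counting those sitting inside $\Afour$ vertex stabilisers), whence the exponent $d_2/2$.

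Second, your claim that after splitting ``each chain module collapses to a single copy of $\Z$ per orbit cell'' fails for $\Kleinfourgroup$: its $2$--torsion block has rank $3$ (spanned by $\xi_i-\xi_1$, $i=2,3,4$), and each inclusion $\Z/2\hookrightarrow\Kleinfourgroup$ contributes a column such as $(1,1,0)^{t}$. This rank--$3$ target is essential: on the theta-shaped and lollipop-shaped components the underlying graph has nontrivial first Betti number, but the resulting $3\times 6$ and $2\times 4$ matrices are injective (elementary divisors $1,1,2$ and $1,2$), so $\Homol_1$ vanishes there and only the circle components contribute to $\Homol_1\cong\Z^{o_2}$. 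Your heuristic ``cycles correspond to edges avoiding Klein four vertices'' reaches the right conclusion but is not justified by your rank-one model, which would leave spurious classes in $\Homol_1$ and give the wrong free rank in $\Homol_0$. Finally, to make the component-by-component argument complete you also need Kr\"amer's classification of the four possible homeomorphism types of connected components of the $2$--torsion subcomplex quotient, which your proposal does not invoke.
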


\begin{theorem} \label{3}
 The part due to $3$-torsion of the Bredon complex of a Bianchi group $\Gamma$ has homology
 \begin{center}
  $\Homol_n(\Psi_\bullet^{(3)}) \cong \begin{cases}
                                      \Z^{2o_3+\iota_3},& n = 0 \medspace \text{\rm or }1,\\
                                      0,&\text{\rm otherwise},
                                     \end{cases}
  $
 \end{center}
where amongst the subgroups of type $\Z/3$ in $\Gamma$,
$o_3$ counts the number of conjugacy classes of those of them which are not contained in any $3$-dihedral subgroup,
and $\iota_3$ counts the conjugacy classes of those of them which are contained in some $3$-dihedral subgroup in $\Gamma$.
\end{theorem}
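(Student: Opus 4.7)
My plan is to leverage Theorem~\ref{splitting} to isolate the 3-primary Bredon subcomplex $\Psi_\bullet^{(3)}$, reduce the 3-torsion subcomplex to a union of simple components, and compute the homology of each. First I would record the 3-primary Artin summands of the complex representation rings at the relevant stabilisers. For a finite group $G$, $R_\C(G)$ admits a canonical decomposition indexed by $G$-conjugacy classes of cyclic subgroups; write $R_\C^{(3)}(G)$ for the summand attached to the class of $\Z/3$-subgroups. The finite subgroups of a Bianchi group $\Gamma$ carrying 3-torsion are $\Z/3$, $\Sthree$ and $\Afour$, and a short character computation gives $R_\C^{(3)}(\Z/3)\cong \Z^2$ (the two non-trivial one-dimensional characters), $R_\C^{(3)}(\Sthree)\cong \Z$ (the two-dimensional irreducible) and $R_\C^{(3)}(\Afour)\cong \Z^2$ (the two non-trivial one-dimensional characters factoring through $\Afour \twoheadrightarrow \Z/3$). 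Frobenius reciprocity ensures that the induction maps between these groups preserve the 3-primary summand, so $\Psi_\bullet^{(3)}$ is a genuine sub-chain complex.

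Next I would determine the shape of the reduced 3-torsion subcomplex. Two observations drive the classification: $\langle \Afour, \Sthree\rangle$ contains $\mathcal{S}_4$, which does not occur as a subgroup of a Bianchi group, so no $\Z/3$ lies simultaneously in an $\Afour$ and an $\Sthree$; and $\Z/3$ is self-normalising inside $\Afour$, so the normaliser in $\Gamma$ of any order-$3$ subgroup is either $\Z/3$ or $\Sthree$. Applying the Bredon-adapted torsion subcomplex reduction from Theorem~\ref{splitting}, I would show that each connected component of the reduced 3-torsion subcomplex is either a \emph{circle} (one edge, with stabiliser $\Z/3$, attached as a loop at one vertex with stabiliser $\Z/3$ or $\Afour$), occurring once per conjugacy class of $\Z/3$ not contained in any $\Sthree$, or an \emph{interval} (two vertices, each with stabiliser $\Sthree$, joined by an edge with stabiliser $\Z/3$), occurring once per conjugacy class of $\Z/3$ contained in some $\Sthree$. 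This produces $o_3$ circles and $\iota_3$ intervals.

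The per-component calculation is then straightforward. On a circle, inner automorphisms act trivially on characters, so the two incidence induction maps from the edge coincide, giving $\partial = 0$ and hence $\Homol_0 \cong \Homol_1 \cong \Z^2$. On an interval, $\mathrm{Ind}_{\Z/3}^{\Sthree}$ sends each of $\chi_\omega$ and $\chi_{\bar\omega}$ to the 2-dimensional irreducible, so on the 3-primary summand it becomes the map $\Z^2 \to \Z$, $(a,b)\mapsto a+b$; the edge boundary $\Z^2 \to \Z \oplus \Z$, $(a,b)\mapsto (a+b,-(a+b))$, has kernel and cokernel both $\Z$, so $\Homol_0 \cong \Homol_1 \cong \Z$. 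Summing over all components yields $\Homol_0(\Psi_\bullet^{(3)}) \cong \Homol_1(\Psi_\bullet^{(3)}) \cong \Z^{2o_3+\iota_3}$, with higher degrees vanishing since the subcomplex is one-dimensional.

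The main obstacle will be the middle step: certifying that the Bredon-analogue of torsion subcomplex reduction collapses the 3-torsion subcomplex to precisely the two enumerated component-types with the asserted counts $o_3$ and $\iota_3$. This requires spelling out the reduction rules adapted from group homology to the representation-ring coefficient setting, and carefully handling the $\Afour$-vertices, where in the unreduced picture four $\Z/3$-edges emanate from every such vertex and must be collapsed consistently with the orbit structure.
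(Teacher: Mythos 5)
Your proposal is correct and follows essentially the same route as the paper: split off the $3$-primary summands of the representation rings (of ranks $2$, $1$, $2$ for $\Z/3$, $\Sthree$, $\Afour$), reduce the $3$-torsion subcomplex to $o_3$ circles and $\iota_3$ intervals with $\Sthree$-endpoints, and compute a zero differential on each circle and the map $(a,b)\mapsto(a+b,-(a+b))$ on each interval. The only notable differences are cosmetic: the paper merges the $\Afour$-vertices away entirely before the final computation (so circle components carry only $\Z/3$-stabilisers, as in its Table~\ref{one}), and it cites the classification of component types from earlier work rather than rederiving it as you sketch.
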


Note that there are formulae for the numbers $o_2, z_2, d_2, o_3$ and $\iota_3$ 
in terms of elementary number-theoretic quantities~\cite{KraemerDiplom}, 
which are very easy to evaluate on the machine~\cite{AccessingFarrell}*{appendix}.

Together with Theorem~\ref{Bredon_to_K-homology} below, we obtain the following formulae for the equivariant $K$-homology of the Bianchi groups.
Note for this purpose that for a Bianchi group $\Gamma$, 
there is a model for \underline{E}$\Gamma$ of dimension 2,
so $\Homol_2(\underline{\rm B}\Gamma ; \thinspace \Z) \cong \Z^{\beta_2}$ is torsion-free.
It has been shown in~\cite{Serre} 
that the naive Euler characteristic of the Bianchi groups vanishes 
(again excluding the two special cases of Gaussian and Eisensteinian integers),
for $\beta_i = \dim \Homol_i(\underline{\rm B}\Gamma ; \thinspace \rationals)$ we have
$\beta_0 -\beta_1 +\beta_2 = 0$  and $\beta_0 = 1$.
\begin{corollary}
 For any Bianchi group $\Gamma$, the short exact sequence of Theorem~\ref{Bredon_to_K-homology} splits into
$K^\Gamma_0(\underbar{\rm E}\Gamma) \cong \Z \oplus \Z^{\beta_2}  \oplus \Z^{z_2} \oplus (\Z/2)^\frac{d_2}{2} \oplus \Z^{2o_3+\iota_3}$. 
Furthermore, $K^\Gamma_1(\underbar{\rm E}\Gamma) \cong \Homol_1(\underline{\rm B}\Gamma; \thinspace \Z) \oplus \Z^{o_2} \oplus \Z^{2o_3+\iota_3}$. 
\end{corollary}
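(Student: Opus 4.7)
The plan is to assemble the corollary mechanically from Theorems~\ref{splitting}, \ref{2} and~\ref{3} together with Theorem~\ref{Bredon_to_K-homology}. First I would apply Theorem~\ref{splitting} to decompose each Bredon homology module $\Homol_n^{\mathfrak{Fin}}(\Gamma; R_\C)$, for $n \in \{0,1,2\}$, as a direct sum of an orbit-space contribution, a $2$-torsion-subcomplex contribution, and a $3$-torsion-subcomplex contribution. The explicit form of the last two summands is given by Theorems~\ref{2} and~\ref{3}, while the orbit-space summand is pinned down by the dimension observations collected just before the statement: the degree-$0$ piece is $\Z$ (since $\beta_0 = 1$), and the degree-$2$ piece is free abelian of rank $\beta_2$, as a subgroup of the free abelian chain module in top dimension.

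With these closed-form expressions in hand, the next step is to substitute them into the short exact sequence provided by Theorem~\ref{Bredon_to_K-homology}. The $K_1$ statement then reads off immediately from the description of $\Homol_1^{\mathfrak{Fin}}(\Gamma; R_\C)$. For $K_0^\Gamma(\underbar{\rm E}\Gamma)$, the sequence exhibits it as an extension with free-abelian quotient $\Homol_2^{\mathfrak{Fin}}(\Gamma; R_\C) \cong \Z^{\beta_2}$ and with kernel $\Homol_0^{\mathfrak{Fin}}(\Gamma; R_\C) \cong \Z \oplus \Z^{z_2} \oplus (\Z/2)^{d_2/2} \oplus \Z^{2o_3+\iota_3}$; this extension therefore splits automatically, producing the displayed direct sum.

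The only genuine check is that $\Homol_2^{\mathfrak{Fin}}(\Gamma; R_\C)$ is torsion-free, without which the splitting of the $K_0$-sequence would not be automatic (note that the $\Z/2$-torsion sits in $\Homol_0^{\mathfrak{Fin}}$, not in $\Homol_2^{\mathfrak{Fin}}$, so an extension in the other direction would \emph{a priori} need a separate argument). This check is however immediate: Theorems~\ref{2} and~\ref{3} contribute nothing in degree $2$, and the two-dimensional model for $\underbar{\rm E}\Gamma$ makes $\Homol_2(\underbar{\rm B}\Gamma; \Z)$ free abelian of rank $\beta_2$. Beyond this, the argument is pure bookkeeping; all the substantive work has already been absorbed into the proofs of Theorems~\ref{splitting}, \ref{2} and~\ref{3}.
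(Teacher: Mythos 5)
Your proposal is correct and follows exactly the route the paper intends (the paper gives no separate proof, relying on the observations stated just before the corollary): substitute Theorems~\ref{splitting}, \ref{2} and~\ref{3} into the sequence of Theorem~\ref{Bredon_to_K-homology}, read off $K_1^\Gamma$ from $\Homol_1^{\mathfrak{Fin}}$, and split the $K_0$-extension because the quotient $\Homol_2^{\mathfrak{Fin}}(\Gamma;R_\C)\cong\Homol_2(\underline{\rm B}\Gamma;\Z)\cong\Z^{\beta_2}$ is free abelian. Your explicit remark that the freeness of the \emph{quotient} (not the kernel, where the $\Z/2$-summands live) is the one point that needs checking is exactly the right emphasis.
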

A table evaluating these formulas for a range of Bianchi groups is given in Appendix \ref{the appendix}.
That table agrees with the machine calculations which were carried out by the author
for all cases of class number $1$ and $2$ with~\cite{BianchiGP}
in the way described in the author's PhD thesis~\cite{RahmThesis} 
(only the cases of class number $1$ were covered at that time), 
following the method of \cites{Sanchez-Garcia, Sanchez-Garcia_Coxeter}.

The remainder of the equivariant $K$-homology of $\Gamma$ is given by 2-periodicity. \\
As the Baum-Connes conjecture is verified by the Bianchi groups, 
these equivariant $K$-homology groups are isomorphic to the $K$-theory of the reduced group $C^*$-algebras.

\subsubsection*{Organisation of the paper}
In Section~\ref{the cell complex}, we describe the cell complex on which we study the Bianchi groups through their action.
In Section~\ref{recalls}, we make recalls about the Bredon chain complex;
in Section~\ref{TSR}, we recall torsion subcomplexes and their reduction.
In Section~\ref{Representation ring splitting}, we give the proof of Theorem \ref{splitting}.
In Section~\ref{Reduction of the torsion subcomplexes}, we give the proof of Theorems~\ref{2} and~\ref{3}.
Section~\ref{general} is a study on how the methods of this paper can be generalised,
and establishes a statement for arbitrary classifying spaces for proper actions with zero-dimensional singular part.
We summarize information on the assembly map for the Bianchi groups in Section~\ref{The assembly map}.
Section~\ref{the appendix} appends results from machine computations on the Bredon homology of the Bianchi groups.

\subsubsection*{Acknowledgements} The author would like to thank \mbox{Conchita Mart\'inez P\'erez}, Alain Valette
and especially Rub\'en S\'anchez-Garc\'\i{}a  for helpful discussions, and an anonymous referee for helpful suggestions.

\section{A suitable model for the classifying space for proper actions} \label{the cell complex}
In order to construct a suitable model for $\underbar{\rm E}\Gamma$
when $\Gamma$ is a Bianchi group, we start with the natural action of $\Gamma < $ SL$_2(\C)$
on its associated symmetric space SL$_2(\C)/_{\text{SU}_2}$.
The latter space is isomorphic to hyperbolic $3$-space $\Hy^3$,
and under this isomorphism, the action can be expressed by the M\"obius transformation formula
when embedding $\Hy^3$ into the quaternions for carrying out the occurring division.
This action makes $\Hy^3$ into a model for $\underbar{\rm E}\Gamma$,
but not a cocompact one; a fact which is annoying for Bredon homology computations.
In order to reach the desired cocompactness,
one could for instance apply the Borel--Serre compactification;
but for explicit homological computations,
it is more convenient to construct a $2$-dimensional $\Gamma$-equivariant retract of $\Hy^3$.
A very conceptual way for such a construction is to use the reduction theory of Borel and Harish-Chandra.
This has been done by Harder~\cite{Harder}, 
put into practice for the Bianchi groups by his student Mendoza~\cite{Mendoza}
and implemented on the machine by Vogtmann~\cite{Vogtmann}.
For a reader desiring to extend the present investigations to other arithmetic groups,
this construction should be the method of choice.
However, the above mentioned computer implementation has not been preserved over time,
and the author has implemented a different model for $\underbar{\rm E}\Gamma$
in order to produce the machine results presented in the appendix (Section~\ref{the appendix}).
That model, due to Fl\"oge~\cite{Floege}, 
comes with some peculiarities as one does first adjoin the singular cusps of the $\Gamma$-action to $\Hy^3$
before retracting $\Gamma$-equivariantly (see details in~\cite{RahmFuchs}).
In order to keep the cell stabilisers finite, one has therefore to start, instead of with $\Hy^3$,
with its Borel--Serre bordification. Then at the cusps, one gets $2$-tori instead of points stabilised by fundamental groups of $2$-tori.
A variation of this has been pursued in detail by Fuchs~\cite{Fuchs}.

On the other hand, as Fl\"oge's model is obtained from the boundary of Bianchi's fundamental polyhedron,
it allows to study the geometry of the Bianchi groups using the insights of old masters like Luigi~Bianchi~\cite{Bianchi1892},
Felix~Klein~\cite{binaereFormenMathAnn9} and Henri Poincar\'e~\cite{Poincare}.
We note that the Harder--Mendoza model and the Fl\"oge model coincide when the ring of integers is a principal ideal domain,
because then there are no singular cusps.
The reader may choose to think of the $\Gamma$-equivariant retract $X$ of $\Hy^3$
in this paper either as the Harder--Mendoza model or as the Fl\"oge model,
according to her or his research interests.
What is important in any case, is to provide $X$ with a $\Gamma$-invariant 
cell structure in which $\Gamma$
does not perform any ``inversions'' of cells
(mapping a cell to itself without restricting to the identity map on that cell).
For theoretical purposes, this can always be achieved using the barycentric subdivision
of a given cell structure; but for explicit computations, 
one should instead make a subdivision along the symmetries of the cells.
Such a subdivision of $X$ has been achieved on the machine~\cite{Higher_torsion},
and has been used for the appendix (Section~\ref{the appendix}).
As a consequence of any subdivision which eliminates the ``inversions'' of cells,
\begin{itemize}
 \item the $2$-cells of $X$ are trivially stabilised;
 \item any $1$-cell stabiliser is of isomorphism type $\Z/_{n\Z}$, $n \in \{1, 2, 3\}$,
 and performs a rotation of order $n$ on $\Hy^3$ such that its $1$-cell is on the rotation axis.
\end{itemize}
On the $0$-cells, obviously any stabilising map restricts to the identity map,
so the cell stabiliser can be any of the finite subgroups of $\Gamma$.

\section{Recalls about the Bredon chain complex} \label{recalls}
As described in~\cite{MislinValette}, 
the equivariant $K$-homology of the classifying space for proper actions $K^G_*(\underbar{\rm E}G)$
can be computed by means of the Bredon homology with coefficients in the complex representation ring,
$\Homol^\mathfrak{Fin}_*(G; R_\C)$. 
More precisely, when we have a classifying space for proper actions of dimension at most 2, 
then the Atiyah---Hirzebruch spectral sequence from its Bredon homology to its equivariant \mbox{$K$-homology}
degenerates on the $E^2$-page and directly yields the following.
\begin{thm}[\cite{MislinValette}] \label{Bredon_to_K-homology}
 Let $G$ be an arbitrary group such that $\dim \underbar{\rm E}G \leq 2$. 
Then there is a natural short exact sequence
$$0 \to \Homol^\mathfrak{Fin}_0(G; R_\C) \to K^G_0(\underbar{\rm E}G) \to \Homol^\mathfrak{Fin}_2(G; R_\C) \to 0 $$
and a natural isomorphism $\Homol^\mathfrak{Fin}_1(G; R_\C) \cong K^G_1(\underbar{\rm E}G)$.
\end{thm}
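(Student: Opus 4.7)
The plan is to derive this as a straight application of the equivariant Atiyah--Hirzebruch spectral sequence, which converges from Bredon homology to equivariant $K$-homology:
\[
E^2_{p,q} \;=\; \Homol^\mathfrak{Fin}_p\bigl(G; \thinspace \mathcal{K}_q\bigr) \;\Rightarrow\; K^G_{p+q}(\underbar{\rm E}G),
\]
where $\mathcal{K}_q$ is the coefficient system on the orbit category sending $G/H \mapsto K^H_q(\mathrm{pt})$. The first task is to identify these coefficients: for a \emph{finite} subgroup $H$, equivariant $K$-homology of a point is computed by the representation ring, so $\mathcal{K}_q(G/H) = R_\C(H)$ for $q$ even and $\mathcal{K}_q(G/H) = 0$ for $q$ odd. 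Since $\underbar{\rm E}G$ has only finite isotropy, the $E^2$-page is $\Homol^\mathfrak{Fin}_p(G; R_\C)$ in even rows and vanishes in odd rows.

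Next I would use the hypothesis $\dim \underbar{\rm E}G \leq 2$ to control the columns: the Bredon chain complex can be chosen to be zero above degree $2$, so $E^2_{p,q} = 0$ unless $0 \leq p \leq 2$ and $q$ is even. With this vanishing in place, I would check that every higher differential must be zero. The differential $d^r\colon E^r_{p,q}\to E^r_{p-r,q+r-1}$ either shifts $q$ from even to odd (for $r=2$, landing in the vanishing odd rows) or shifts $p$ out of the strip $\{0,1,2\}$ (for $r\geq 3$, landing in $p-r<0$). Hence the spectral sequence collapses at $E^2$, so $E^\infty = E^2$.

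It then remains to read off the associated graded of the filtration on $K^G_n(\underbar{\rm E}G)$. For $n=1$, the only nonzero contribution on the anti-diagonal $p+q=1$ with $q$ even is $E^\infty_{1,0} = \Homol^\mathfrak{Fin}_1(G; R_\C)$, which yields the asserted isomorphism. For $n=0$, the anti-diagonal $p+q=0$ with $q$ even carries exactly two nonzero groups, namely $E^\infty_{0,0} = \Homol^\mathfrak{Fin}_0(G;R_\C)$ and $E^\infty_{2,-2} = \Homol^\mathfrak{Fin}_2(G;R_\C)$; the odd entry $E^\infty_{1,-1}$ vanishes. Writing out the filtration $0 = F_{-1}\subseteq F_0 \subseteq F_1 \subseteq F_2 = K^G_0(\underbar{\rm E}G)$ with $F_1 = F_0$ (because $E^\infty_{1,-1}=0$) gives the short exact sequence of the theorem, with naturality inherited from that of the spectral sequence.

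The only delicate points that would need care are (a) identifying the coefficient system $\mathcal{K}_q$ on the full orbit category, including functoriality under the transfer maps coming from inclusions of finite subgroups (which matches the functoriality of $R_\C$ under restriction/induction used on the Bredon side), and (b) verifying that the constructed spectral sequence is actually the Atiyah--Hirzebruch one associated to the skeletal filtration of $\underbar{\rm E}G$, so that the filtration on $K^G_\ast(\underbar{\rm E}G)$ arising from the spectral sequence coincides with the geometric one. I expect (a) to be the main conceptual obstacle; both points are, however, already handled in the literature, so in practice I would cite~\cite{MislinValette} for the construction and convergence of the spectral sequence and keep the argument limited to the collapse and the extension analysis above.
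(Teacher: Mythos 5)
Your proposal is correct and follows exactly the route the paper itself indicates: the paper does not reprove this result but cites it from Mislin--Valette, prefaced by the one-line remark that for $\dim \underbar{\rm E}G \leq 2$ the Atiyah--Hirzebruch spectral sequence from Bredon homology to equivariant $K$-homology degenerates at the $E^2$-page, which is precisely the collapse-and-filtration argument you spell out. Your identification of the coefficient system, the vanishing of all differentials, and the reading-off of the $n=0$ extension and the $n=1$ isomorphism are all accurate.
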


We will follow S\'anchez-Garc\'ia's treatment \cites{Sanchez-Garcia,Sanchez-Garcia_Coxeter}. \\
Consider the Bianchi group \mbox{$\Gamma := \mathrm{PSL_2}(\mathcal{O}_{-m})$.} 
We use the $2$-dimensional model $X$ for $\underbar{E}\Gamma$ described in Section~\ref{the cell complex}.

Denote by $\Gamma_\sigma$ the stabiliser of a cell $\sigma$, and by $R_\C(G)$ the complex representation ring of a group $G$.
 We will study the Bredon chain complex
$$\xymatrix{
0 \ar[r] & \bigoplus\limits_{\sigma \in \thinspace_\Gamma \backslash X^{(2)}} R_\C (\Gamma_\sigma) \ar[r]^{\Psi_2 } & 
\bigoplus\limits_{\sigma \in \thinspace_\Gamma \backslash X^{(1)}} R_\C (\Gamma_\sigma)  \ar[r]^{\Psi_1} &
\bigoplus\limits_{\sigma \in \thinspace_\Gamma \backslash X^{(0)}} R_\C (\Gamma_\sigma) \ar[r] & 0,
}  $$
of our $\Gamma$-cell complex $X$. 
As $X$ is a model for the classifying space for proper $\Gamma$-actions, 
the homology of this Bredon chain complex is the Bredon homology $\Homol^\mathfrak{Fin}_p(\Gamma; \thinspace R_\C)$
of~$\Gamma$~\cite{Sanchez-Garcia}.

\section{Recalls about torsion subcomplexes} \label{TSR}

In this section we recall the $\ell$--torsion subcomplexes theory of~\cite{AccessingFarrell}
for the calculation of group homology, which we are going to adapt to the calculation of Bredon homology in this paper. 
 We require any discrete group $\Gamma$ 
under our study to be provided with a $\varGamma$\textit{--cell complex}, 
that is a  finite-dimensional cell complex $X$ with cellular 
$\Gamma$--action such that each cell stabilizer fixes its cell point-wise.  
Let $\ell$ be a prime number.
\begin{df}
 The \emph{$\ell$--torsion subcomplex} of a $\Gamma$--cell complex $X$
 consists of all the cells of $X$ whose stabilizers in~$\Gamma$ 
 contain elements of order $\ell$.
\end{df}

We further require that the fixed point set~$X^G$ 
be acyclic for every nontrivial finite $\ell$--subgroup $G$ of~$\Gamma$.
Then Brown's proposition X.(7.2)~\cite{Brown} specializes as follows. 
\begin{proposition} \label{Brownian}
 There is an isomorphism between the $\ell$--primary parts of the Farrell cohomology of~$\Gamma$
 and the
 $\Gamma$--equivariant Farrell cohomology of the $\ell$--torsion subcomplex.
\end{proposition}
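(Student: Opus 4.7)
The plan is to localize Brown's proposition X.(7.2) at the prime $\ell$, exploiting the fact that cells whose stabilizers have order coprime to $\ell$ do not contribute to the $\ell$-primary Farrell cohomology.

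First I would set up Brown's equivariant Farrell--Tate cohomology spectral sequence associated to the skeletal filtration of the $\Gamma$--cell complex $X$:
$$E_1^{p,q} = \prod_{\sigma \in \thinspace_\Gamma \backslash X^{(p)}} \hat{H}^q(\Gamma_\sigma) \Longrightarrow \hat{H}^{p+q}_\Gamma(X).$$
For any cell $\sigma$ not belonging to the $\ell$-torsion subcomplex $X_s$, the finite group $\Gamma_\sigma$ has order coprime to $\ell$, so its Farrell--Tate cohomology is entirely annihilated on passage to $\ell$-primary parts. Hence the $\ell$-primary part of the $E_1$-page is concentrated on $\Gamma$-orbits of cells of $X_s$, and the same holds throughout the spectral sequence. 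Comparing with the corresponding spectral sequence for $X_s$, along the equivariant inclusion $X_s \hookrightarrow X$, one concludes
$$\hat{H}^*_\Gamma(X)_{(\ell)} \cong \hat{H}^*_\Gamma(X_s)_{(\ell)}.$$

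Next I would combine this with the version of Brown's original isomorphism $\hat{H}^*(\Gamma) \cong \hat{H}^*_\Gamma(X)$ available under our weaker hypothesis: acyclicity of $X^G$ is assumed only for nontrivial finite $\ell$-subgroups $G$, not for all finite subgroups. Nevertheless, Brown's argument goes through after passage to $\ell$-primary parts, since the proof of X.(7.2) reduces to comparing the above spectral sequence with its counterpart for a point, and the difference terms are governed by the homology of fixed-point sets of finite subgroups; passing to $\ell$-primary parts eliminates all contributions coming from finite subgroups of order coprime to $\ell$, leaving only the hypothesized acyclic fixed-point sets of nontrivial finite $\ell$-subgroups. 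This yields $\hat{H}^*(\Gamma)_{(\ell)} \cong \hat{H}^*_\Gamma(X)_{(\ell)}$, and combining with the previous display proves the proposition.

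The main obstacle lies in the careful verification that Brown's argument for X.(7.2) really does localize at $\ell$ under only the $\ell$-subgroup acyclicity hypothesis. The key fact driving this is standard but must be invoked explicitly: for a finite group $F$ of order prime to $\ell$, the $\ell$-primary component of $\hat{H}^*(F)$ vanishes. Once this is recorded, the suppression of non-$\ell$ contributions in every comparison step is automatic, and the assembly of the two displayed isomorphisms into the statement of the proposition is purely formal.
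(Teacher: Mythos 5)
The paper itself contains no proof of Proposition~\ref{Brownian}: it is presented as a direct specialization of Brown's Proposition X.(7.2) in~\cite{Brown}, and the citation is the proof. What you have written is therefore a reconstruction of Brown's argument rather than an alternative to anything in the paper, and as such it is essentially sound. The decisive step is exactly the one you isolate: in the isotropy spectral sequence $E_1^{p,q}=\prod_{\sigma}\hat{H}^q(\Gamma_\sigma)$ converging to $\hat{H}^{p+q}_\Gamma(X)$, every cell $\sigma$ outside the $\ell$--torsion subcomplex has finite stabiliser of order prime to $\ell$, whose Tate--Farrell cohomology is annihilated by $|\Gamma_\sigma|$ and so has trivial $\ell$--primary part; since $X$ is finite-dimensional (and the action cocompact, so the products are finite), the comparison along $X_s\hookrightarrow X$ gives $\hat{H}^*_\Gamma(X)_{(\ell)}\cong\hat{H}^*_\Gamma(X_s)_{(\ell)}$. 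Your second step, however, is more laboured than it needs to be and slightly misattributes the role of the hypotheses: for the models used here, $X$ is a $\Gamma$-equivariant deformation retract of $\Hy^3$, hence contractible, so $\hat{H}^*_\Gamma(X)\cong\hat{H}^*(\Gamma)$ holds integrally, with no localization and no reference to fixed-point sets at all. The requirement that $X^G$ be acyclic for every nontrivial finite $\ell$-subgroup $G$ is carried over from Brown's statement and is what is needed afterwards --- for reducing the $\ell$--torsion subcomplex and for reading off its components in terms of conjugacy classes of finite subgroups --- not for the two isomorphisms you prove. With that adjustment, the ``main obstacle'' you flag at the end dissolves: the only localization fact required is the elementary vanishing of $\hat{H}^*(F)_{(\ell)}$ for $\ell\nmid|F|$, which you have already recorded.
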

With the splitting of Theorem \ref{splitting}, we obtain a Bredon homology analogue of the above proposition,
finding for each occurring prime $\ell$ a component of the Bredon homology carried by the $\ell$--torsion subcomplex.

For a given Bianchi group, the $\ell$--torsion subcomplex can be quite large.  
It turns out to be useful to reduce this subcomplex, and we identify two conditions under 
which we can do this in a way that Proposition~\ref{Brownian} still holds.
 
\begin{ConditionA} \label{cell condition}
In the $\ell$--torsion subcomplex, let $\sigma$ be a cell of dimension $n-1$
which lies in the boundary of precisely two $n$--cells 
representing different orbits, $\tau_1$ and~$\tau_2$.   
Assume further that no higher-dimensional cells of the $\ell$--torsion subcomplex touch $\sigma$;
and that the $n$--cell stabilizers admit an isomorphism
$\Gamma_{\tau_1} \cong \Gamma_{\tau_2}$. 
\end{ConditionA}

\begin{ConditionB} 
The inclusion of the cell stabilizer $\Gamma_\sigma$ into $\Gamma_{\tau_1}$ and
$\Gamma_{\tau_2}$ induces isomorphisms on$\mod \ell$ cohomology.
\end{ConditionB}

When both conditions are satisfied in the $\ell$--torsion subcomplex,  
we merge the cells $\tau_1$ and $\tau_2$ along~$\sigma$ and 
do so for their entire orbits.  
The effect of this merging is to decrease the size of the  
$\ell$--torsion subcomplex without changing its
$\Gamma$--equivariant Farrell cohomology.  
This process can often be repeated: by a ``terminal vertex,'' we will denote a vertex with 
no adjacent higher-dimensional cells and precisely one adjacent edge in the quotient space,  
and by ``cutting off'' the latter edge,
we will mean that we remove the edge together with the terminal vertex from our cell complex.
\begin{df}
 The \emph{reduced $\ell$--torsion subcomplex} associated to a $\Gamma$--cell complex~$X$
 is the cell complex obtained by recursively merging orbit-wise all the pairs of cells satisfying 
 conditions~$\cellCondition$ and~$\weakerCondition$,
 and cutting off edges that admit a terminal vertex when condition~$\weakerCondition$ 
 is satisfied.
\end{df}
The following theorem, stating that Proposition~\ref{Brownian} 
still holds after reducing,
is proved in~\cite{AccessingFarrell}:
\textit{
 There is an isomorphism between the $\ell$--primary parts of the Farrell cohomology of~$\Gamma$ and the
 $\Gamma$--equivariant Farrell cohomology of the reduced $\ell$--torsion subcomplex.
}
In the case of a trivial kernel of the action on the $\Gamma$--cell complex, 
this allows one to establish general formulae for the Farrell cohomology of~$\Gamma$ 
\cite{AccessingFarrell}.
Analogously, we will use our adaptation of torsion subcomplex reduction to Bredon homology in order to prove the formulae in Theorems~\ref{2} and~\ref{3}.

\begin{table} 
 \begin{center} 
 \caption{Connected components of reduced torsion subcomplex quotients for the Bianchi groups} \label{table:subcomplexes}
 \label{one}
 \begin{tabular}{|c|c|c|c|c|}
 \hline & & &&\\
\begin{tabular}{c}$2$--torsion\\subcomplex components\end{tabular}
& counted by & & 
\begin{tabular}{c}$3$--torsion\\subcomplex components\end{tabular} 
& counted by
\\  \hline & & &&\\
 $\circlegraph \thinspace \Z/2$ & $o_2$ & &  $\circlegraph \thinspace \Z/3$ & $o_3$\\
 & & & &\\
$\Afour \edgegraph \Afour$ & $\iota_2$ & & $\Sthree \edgegraph \Sthree$ & $\iota_3$\\
 & & &&\\
$\Kleinfourgroup \graphFive \thinspace \Kleinfourgroup$ & $\theta$&&&\\
 & & &&\\
$\Kleinfourgroup \graphTwo \Afour$ & $\rho$ &&&\\
\hline
\end{tabular}
\end{center}
\end{table}

Table \ref{one} displays the types of connected components of reduced torsion subcomplex quotients for the action of the Bianchi groups on hyperbolic space:
Norbert Kr\"amer~\cite{Kraemer}*{Satz 8.3 and Satz 8.4} has shown that the types $\circlegraph$, $\graphFive$, $\graphTwo$ and $\edgegraph$ are all possible homeomorphism types which 
connected components of the $2$--torsion subcomplex of the action of a Bianchi group on hyperbolic space can have.
For $3$--torsion, the existence of only the two specified types was already proven in~\cite{Rahm_homological_torsion}.

\section{Representation ring splitting} \label{Representation ring splitting}

Recall the following classification of Felix Klein~\cite{binaereFormenMathAnn9}.
\begin{Lem}[Klein] \label{finiteSubgroups}
The finite subgroups in $\mathrm{PSL}_2(\ringO)$
 are exclusively of isomorphism types the cyclic groups of orders one, two and three, the $2$-dihedral group
\mbox{$\Kleinfourgroup \cong \Z/2 \times \Z/2$}, the $3$-dihedral group $\Sthree$ or the tetrahedral group isomorphic to the alternating group~$\Afour$.
\end{Lem}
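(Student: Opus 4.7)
The plan is to reduce Klein's classification to two standard ingredients: (i) the list of possible orders of torsion elements in $\mathrm{PSL}_2(\mathcal{O}_{-m})$, and (ii) the classical classification of finite subgroups of $\mathrm{PSL}_2(\C)$ up to conjugacy.

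First I would determine the possible orders. Let $g \in \mathrm{PSL}_2(\mathcal{O}_{-m})$ be a torsion element, and pick a lift $M \in \mathrm{SL}_2(\mathcal{O}_{-m})$. Since $M$ is of finite order in $\mathrm{SL}_2(\C)$, it is diagonalisable with eigenvalues $\zeta, \zeta^{-1}$ for some root of unity $\zeta$. The trace $\zeta + \zeta^{-1} = 2\cos(2\pi k / n)$ is a real algebraic integer lying in $\mathcal{O}_{-m}$, hence a rational integer. Thus $2\cos(2\pi k / n) \in \{-2, -1, 0, 1, 2\}$, forcing the order of $M$ in $\mathrm{SL}_2$ to lie in $\{1, 2, 3, 4, 6\}$. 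Passing to $\mathrm{PSL}_2$ (i.e.\ dividing by $\pm I$) shows that the order of $g$ itself lies in $\{1, 2, 3\}$.

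Next I would invoke the classical fact that any finite subgroup $G \leq \mathrm{PSL}_2(\C) \cong \mathrm{Isom}^+(\Hy^3)$ has a fixed point in $\Hy^3$ (the barycentre of an orbit), and so is conjugate in $\mathrm{PSL}_2(\C)$ to a subgroup of the maximal compact $\mathrm{PSU}_2 \cong \mathrm{SO}(3)$. The finite subgroups of $\mathrm{SO}(3)$ are known to be, up to isomorphism, the cyclic groups $C_n$, the dihedral groups $D_n$, the tetrahedral group $\mathcal{A}_4$, the octahedral group $\mathcal{S}_4$, and the icosahedral group $\mathcal{A}_5$.

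Finally I would cross-reference these two lists. The constraint that every non-trivial element has order $2$ or $3$ eliminates $C_n$ for $n \geq 4$, $D_n$ for $n \geq 4$ (the rotation of order $n$ is forbidden), as well as $\mathcal{S}_4$ (contains an element of order $4$) and $\mathcal{A}_5$ (contains an element of order $5$). What survives is exactly $C_1, C_2, C_3, \mathcal{D}_2 = D_2, \mathcal{D}_3 = D_3$, and $\mathcal{A}_4$, matching the statement. The main subtlety is the order-restriction step: one must be careful with the $\mathrm{SL}_2 \to \mathrm{PSL}_2$ quotient, since a lift of an order-$2$ element in $\mathrm{PSL}_2$ may have order $4$ in $\mathrm{SL}_2$, and a lift of an order-$3$ element may have order $6$; the rationality-of-trace argument handles all such lifts uniformly.
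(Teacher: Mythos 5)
Your argument is correct and complete. Note, however, that the paper itself does not prove this lemma at all: it is stated as a recollection of Klein's 1875 classification and supported only by a citation, so there is no ``paper proof'' to compare against. Your proposal supplies the standard modern argument, and both halves are sound: the trace of any finite-order lift $M\in\mathrm{SL}_2(\mathcal{O}_{-m})$ is a real element of $\mathcal{O}_{-m}$, hence lies in $\mathbb{Z}\cap[-2,2]$ (this is exactly where imaginary quadraticity enters, via $\mathcal{O}_{-m}\cap\mathbb{R}=\mathbb{Z}$), forcing orders $1,2,3,4,6$ in $\mathrm{SL}_2$ and hence $1,2,3$ in $\mathrm{PSL}_2$; and the Cartan fixed-point argument conjugates any finite subgroup of $\mathrm{PSL}_2(\mathbb{C})\cong\mathrm{Isom}^+(\mathcal{H}^3)$ into $\mathrm{PSU}_2\cong\mathrm{SO}(3)$, whose finite subgroups are the cyclic and dihedral groups together with $\mathcal{A}_4$, $\mathcal{S}_4$, $\mathcal{A}_5$. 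Intersecting the two constraints leaves exactly $C_1,C_2,C_3,\mathcal{D}_2,\mathcal{D}_3,\mathcal{A}_4$, which is the assertion of the lemma (an exclusion statement only --- you correctly do not need to show each type is realised). You were also right to flag the $\mathrm{SL}_2\to\mathrm{PSL}_2$ subtlety: the order-$4$ and order-$6$ lifts are precisely why the trace values $0$ and $1$ must be retained rather than discarded.
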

We further use the existence of geometric models for the Bianchi groups in which all edge stabilisers are finite cyclic
and all cells of dimension $2$ and higher are trivially stabilised.
Therefore, the system of finite subgroups of the Bianchi groups admits inclusions only emanating from cyclic groups. 
This makes the Bianchi groups and their subgroups subjects to the splitting of Bredon homology stated in Theorem~\ref{splitting}.

The proof of Theorem~\ref{splitting} is based on the above particularities of the Bianchi groups,
and applies the following splitting lemma for the involved representation rings
 to a Bredon complex for~$(\underline{\rm E}\Gamma, \Gamma)$.
\begin{lemma} \label{representation ring splitting}
Consider a group $\Gamma$ such that every one of its finite subgroups is either cyclic of order at most~$3$, or of one of the types 
$\Kleinfourgroup, \Sthree$ or~$\Afour$.
Then there exist bases of the complex representation rings of the finite subgroups of~$\Gamma$,
 such that simultaneously every morphism of representation rings
 induced by inclusion of cyclic groups into finite subgroups of~$\Gamma$,
 splits as a matrix into the following diagonal blocks.
\begin{enumerate}
 \item A block of rank $1$ induced by the trivial and regular representations,
 \item a block induced by the $2$--torsion subgroups
 \item and a block induced by the $3$--torsion subgroups.
\end{enumerate}
\end{lemma}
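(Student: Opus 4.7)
The plan is to produce, for each isomorphism type $G$ of finite subgroup appearing in Lemma~\ref{finiteSubgroups}, an intrinsic direct-sum decomposition
\[
R_\C(G) \;=\; \Z\cdot\mathbf{1}_G \;\oplus\; M_2(G) \;\oplus\; M_3(G)
\]
which is automatically preserved by every restriction map $\mathrm{res}_G^H$ to a cyclic subgroup $H \subseteq G$. The required bases are then obtained by picking any $\Z$-bases of $M_2(G)$ and $M_3(G)$ and adjoining $\mathbf{1}_G$.

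The candidate summands are $M_2(G) := \ker\bigl(R_\C(G) \xrightarrow{\mathrm{res}} R_\C(P_3(G))\bigr)$ and $M_3(G) := \ker\bigl(R_\C(G) \xrightarrow{\mathrm{res}} R_\C(P_2(G))\bigr)$, where $P_p(G)$ is a chosen Sylow $p$-subgroup of $G$ (the kernel is independent of the choice within conjugacy). The first step is to verify, case by case through the six isomorphism types in Klein's list, that the sum $\Z\cdot\mathbf{1}_G + M_2(G) + M_3(G)$ is internal and exhausts $R_\C(G)$: this is a short character-table calculation. For example, writing $\chi_1,\chi_2$ for the two non-trivial $1$-dimensional characters of $\Afour$ (lifted from $\Afour/\Kleinfourgroup \cong \Z/3$) and $\mathrm{std}$ for its standard $3$-dimensional irrep, one finds $M_2(\Afour) = \Z(\mathrm{std} - \mathbf{1} - \chi_1 - \chi_2)$ of rank $1$ and $M_3(\Afour) = \Z(\chi_1 - \mathbf{1}) \oplus \Z(\chi_2 - \mathbf{1})$ of rank $2$, which together with $\mathbf{1}_{\Afour}$ form a $\Z$-basis of $R_\C(\Afour)$. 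The other five groups on Klein's list have at most one non-trivial Sylow subgroup, or are themselves cyclic, and the decomposition is forced by the ranks.

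The second step is compatibility with restriction. If $H \subseteq G$ is cyclic, then any Sylow $p$-subgroup of $H$ is a $p$-subgroup of $G$, and after conjugation lies inside $P_p(G)$. Functoriality of restriction then yields
\[
\mathrm{res}_H^{P_p(H)} \circ \mathrm{res}_G^H \;=\; \mathrm{res}_{P_p(G)}^{P_p(H)} \circ \mathrm{res}_G^{P_p(G)},
\]
so $\mathrm{res}_G^H$ carries $M_\ell(G)$ into $M_\ell(H)$ whenever $\{p,\ell\} = \{2,3\}$, while trivially $\mathrm{res}_G^H(\mathbf{1}_G) = \mathbf{1}_H$. In the bases $\{\mathbf{1}_G\} \cup (\text{basis of }M_2(G)) \cup (\text{basis of }M_3(G))$ the matrix of $\mathrm{res}_G^H$ is therefore block-diagonal with precisely the three blocks demanded. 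The label ``trivial and regular'' for the rank-$1$ block reflects that the augmentation homomorphism commutes with restriction and that the regular representation $\mathrm{reg}_G$ (satisfying $\mathrm{res}_G^H\,\mathrm{reg}_G = [G{:}H]\,\mathrm{reg}_H$) has a non-zero component in the rank-$1$ block, so that both $\mathbf{1}_G$ and $\mathrm{reg}_G$ serve as natural generators of that block modulo $M_2(G) \oplus M_3(G)$.

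The main obstacle I anticipate is the case $G = \Afour$: it is the only group on Klein's list in which both Sylow subgroups are non-trivial and one of the irreducible representations (namely $\mathrm{std}$) has non-zero character values on both $\Kleinfourgroup$ and the $3$-cycles. One has to check by explicit computation that the specific integer combination $\mathrm{std} - \mathbf{1} - \chi_1 - \chi_2$ cleanly isolates the $2$-primary contribution and that, after this correction, no residual ``cross term'' spoils the direct sum; for every other group on the list either the $2$- or the $3$-Sylow subgroup is trivial, so that one of $M_2(G)$, $M_3(G)$ vanishes and the decomposition is immediate.
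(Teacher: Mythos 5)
Your internal decomposition $R_\C(G)=\Z\cdot\mathbf{1}_G\oplus M_2(G)\oplus M_3(G)$ is correct as a direct sum of lattices: the rank counts and the $\Afour$ computation are right, and for $\Kleinfourgroup$, $\Sthree$ and $\Afour$ the bases you obtain are exactly the transformed character tables used in the paper. Your functoriality argument also correctly shows that this decomposition is preserved by \emph{restriction}. The gap is that restriction is not the morphism the lemma is about. The map induced by an inclusion $C\hookrightarrow G$ that enters the Bredon differential is the \emph{induction} homomorphism $\mathrm{Ind}_C^G\colon R_\C(C)\to R_\C(G)$ --- this is what the Frobenius-reciprocity matrix $(\phi_i\downarrow \mid \tau_j)$ in the paper's proof encodes, and it is why the first block of the lemma is described as generated by the trivial \emph{and regular} representations. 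Your decomposition is not stable under induction: already for the inclusion of the trivial group (the stabiliser of every $2$-cell, so this map is all of $\Psi_2$) into $\Z/2$ one has $\mathrm{Ind}(\mathbf{1})=\rho_1+\rho_2=2\rho_1+(\rho_2-\rho_1)$, which has a non-zero component in your $M_2(\Z/2)$; likewise $\mathrm{Ind}_{\Z/2}^{\Sthree}(\rho_1)=\pi_1+\pi_3$ meets all three of your blocks of $R_\C(\Sthree)$. So in your bases the matrices of the relevant morphisms are not block diagonal, and the splitting-off of the orbit-space chain complex in Theorem~\ref{splitting} --- which needs the rank-one block of a cyclic edge stabiliser to \emph{contain} the regular representation, not merely to see it modulo $M_2\oplus M_3$ --- would fail.

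The repair is to dualise. Since each of your bases is an integral unimodular transform of the orthonormal basis of irreducible characters, its Gram matrix is unimodular and the dual basis is again a $\Z$-basis; induction, being the adjoint of restriction for the character inner product, is then block diagonal in the dual decomposition, whose rank-one summand for a cyclic group is $\Z\cdot\mathrm{reg}$ rather than $\Z\cdot\mathbf{1}$ (indeed the dual basis of your $\{\rho_1,\,\rho_2-\rho_1\}$ is precisely the paper's $\{\rho_1+\rho_2,\,\rho_2\}$). With that extra step your kernel-of-restriction-to-Sylow description becomes a clean conceptual substitute for the paper's case-by-case manipulation of character tables; without it, the proof establishes the splitting for the wrong family of maps.
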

As this splitting holds simultaneously for every morphism of representation rings,
we have such a splitting for every morphism of formal sums of representation rings,
and hence for the differential maps of the Bredon complex for any Bianchi group and any of their subgroups.
\begin{proof}
We consider the complex representation ring of these finite groups as the free 
$\Z$-module the basis of which are the irreducible characters of the group
 (here and in the following, we identify representations with their associated characters,
 basing ourselves on~\cite{Serre_linear}).
 The trivial group admits only the representations by the identity matrix,
 so its only irreducible character is given by the trace $1$.
For the other finite subgroups of the Bianchi groups,
 we make the below choices of conjugacy representatives of their elements,
 and specify below the irreducible characters by the values they take on these elements.
For  $\Z/2 = \langle\, g \, | \, g^2 = 1 \rangle$, 
we transform the tables of irreducible characters into the following basis for the representation ring.
$$
\left(
\begin{array}{c|rr}
       \Z/2 & 1 & g \\
       \hline
       \rho_1 & 1 & 1 \\
       \rho_2 & 1 & -1 \\
\end{array}
\right)
\mapsto
\left(
\begin{array}{c|rr}
       \Z/2 & 1 & g \\
       \hline
       \rho_1 +\rho_2 & 2 & 0 \\
	       \rho_2 & 1 & -1 \\
\end{array}
\right).
$$
 Let $j = e^\frac{2\pi i}{3}$.  Then for $\Z/3 = \langle\, h \, |\, h^3 = 1\, \rangle$, we transform
$$
\left(
\begin{array}{c|ccc}
       \Z/3 & 1 & h & h^2\\
       \hline
       \sigma_1 & 1 & 1  & 1\\
       \sigma_2 & 1 & j & j^2\\
       \sigma_3 & 1 & j^2 & j\\
\end{array}
\right)
\mapsto
\left(
\begin{array}{c|ccc}
       \Z/3 & 1 & h & h^2\\
       \hline
\sum_i \sigma_i & 3 & 0  & 0\\
       \sigma_2 & 1 & j & j^2\\
       \sigma_3 & 1 & j^2 & j\\
\end{array}
\right),
$$
and for the the Klein four-group 
\mbox{$\Kleinfourgroup = \langle\, a, \, b \, |\, a^2 = b^2 = (ab)^2 = 1\, \rangle$},
$$
\left(
\begin{array}{c|rrrr}
       \Kleinfourgroup & 1 & a & b & ab\\
       \hline
       \xi_1 & 1 & 1 & 1 & 1\\
       \xi_2 & 1 & -1 & -1 & 1\\
       \xi_3 & 1 & -1 & 1 & -1 \\
       \xi_4 & 1 & 1 & -1 & -1\\
\end{array}
\right)
\mapsto
\left(
\begin{array}{c|rrrr}
       \Kleinfourgroup & 1 & a & b & ab\\
       \hline
	      \xi_1 & 1 &  1 & 1  & 1\\
       \xi_2 -\xi_1 & 0 & -2 & -2 & 0\\
       \xi_3 -\xi_1 & 0 & -2 & 0  & -2 \\
       \xi_4 -\xi_1 & 0 &  0 & -2 & -2\\
\end{array}
\right).
$$
In all these three cases, we encounter either only 2--torsion or only 3--torsion.
In the cases of $\Sthree$ and $\Afour$, where we have both types of torsion,
 we split the representation ring into a direct sum of submodules associated to
 respectively the trivial subgroup, 2--torsion and 3--torsion.
We achieve this in the following way.
\begin{itemize}
\item We write the symmetric group $\Sthree = \langle\, (12), (123) \,|$ cycle relations $
\rangle$ in cycle type notation. Then we apply the following base transformation to its character table.
\begin{center}
$
\left(
\begin{array}{c|rrr}
       \Sthree & 1 & (12) & (123) \\
       \hline
       \pi_1 & 1 & 1 & 1  \\
       \pi_2 & 1 & -1 & 1 \\
       \pi_3 & 2 & 0 & -1 \\
\end{array} \right) \mapsto  \left(
\begin{array}{c|rrr}
             \Sthree & 1 & (12) & (123) \\
       \hline
				     \pi_1 & 1 & 1 & 1  \\
	\widetilde{\pi_2}:=  \pi_2 - \pi_1 & 0 & -2 & 0 \\
   \widetilde{\pi_3}:= \pi_3 -\pi_2 -\pi_1 & 0 & 0 & -3 \\
\end{array} \right)$
\end{center}
\item We also write the alternating group $\Afour$ in cycle type notation,
 and let again \mbox{$j = e^\frac{2\pi i}{3}$.} Then we transform 
$$
\left(
\begin{array}{c|rrrr}
       \Afour & 1 & (12)(34) & (123) & (132)\\
       \hline
       \chi_1 & 1 &  1 & 1   & 1\\
	\chi_2& 3 & -1 &0    & 0 \\
       \chi_3 & 1 &  1 & j   & j^2\\
       \chi_4 & 1 &  1 & j^2 & j
\end{array}
\right)
\mapsto 
\left(
\begin{array}{c|rrrr}
       \Afour & 1 & (12)(34) & (123) & (132)\\
       \hline
			 \chi_1 & 1 & 1  & 1   & 1\\
\chi_2 - \chi_1 -\chi_3 -\chi_4 & 0 & -4 & 0   & 0 \\
	         \chi_3 -\chi_1 & 0 &  0 & j-1 & j^2 -1\\
		 \chi_4 -\chi_1 & 0 &  0 & j^2-1 & j-1
\end{array}
\right).
$$
\end{itemize}
The above transformed tables consist no more only of irreducible characters,
 but clearly, they still are bases for the complex representation rings of the concerned groups.

For an injective morphism $H \hookrightarrow G$ of finite groups,
 we compute as follows the map induced on the complex representation rings.
 We restrict the characters $\phi_i$ of $G$ to the image of $H$, and write $\phi_i\downarrow$ for the restricted character.
 Then  we consider the scalar products 
$$(\phi_i\downarrow | \tau_j) := \frac{1}{|H|} \sum\limits_{h \in H} \phi_i\downarrow(h) \cdot \overline{\tau_j(h)}$$
 with the characters $\tau_j$ of $H$.
 By Frobenius reciprocity, the induced map of representation rings is given by the matrix $(\phi_i\downarrow | \tau_j)_{i,j}$.
When $H$ is the trivial group, this matrix is the one-column-matrix of values of the characters of $G$ on the neutral element.
For the non-trivial inclusions amongst finite subgroups of the Bianchi groups, let us compute this matrix case by case.
\begin{itemize}
\item Any inclusion $\Z/\ell \hookrightarrow \Sthree$ maps the generator of $\Z/\ell$
to the only conjugacy class of elements of order $\ell$ in $\Sthree$. 
So it induces the map obtained by restricting to two cycles,
\begin{itemize}
 \item for $\ell = 2$, the cycles $(1)$ and $(12)$: 
$$\begin{array}{c|rr|cc}
\Z/2 \hookrightarrow \Sthree & (1) & (12) & (\pi_i \downarrow | \rho_1 + \rho_2) & (\pi_i \downarrow | \rho_2) \\
\hline 
	    \pi_1 \downarrow & 1 & 1  & 1  & 0 \\ 
\widetilde{\pi_2} \downarrow & 0 & -2 & 0 & 1 \\
\widetilde{\pi_3} \downarrow & 0 & 0  & 0  & 0 \\
\end{array}$$
\item for $\ell = 3$,  the cycles $(1)$ and $(123)$:
$$\begin{array}{c|rr|ccc}
\Z/3 \hookrightarrow \Sthree & (1) & (123) & (\pi_i \downarrow | \sum_i \sigma_i) & (\pi_i \downarrow | \sigma_2) 
& (\pi_i \downarrow | \sigma_3)\\
\hline 
            \pi_1 \downarrow & 1 & 1  & 1  & 0 & 0\\ 
\widetilde{\pi_2} \downarrow & 0 & 0  & 0  & 0 & 0 \\
\widetilde{\pi_3} \downarrow & 0 & -3 & 0 & 1 & 1 \\
\end{array}$$
\end{itemize}
\item There are three possible inclusions $\Z/2 \hookrightarrow \Kleinfourgroup$,
 namely $g \mapsto a$, $g\mapsto b$ and $g\mapsto ab$.
 For each of these inclusions, we restrict to the image of $\Z/2$:
$$\begin{array}{c|rr|cc}
g \mapsto a & 1 & a & (\xi_i \downarrow | \rho_1 +\rho_2) & (\xi_i \downarrow | \rho_2) \\
\hline 
\xi_1 	       \downarrow & 1 & 1 & 1 & 0 \\ 
(\xi_2 -\xi_1) \downarrow & 0 & -2 & 0 & 1 \\
(\xi_3 -\xi_1) \downarrow & 0 & -2 & 0 & 1 \\
(\xi_4 -\xi_1) \downarrow & 0 & 0 & 0 & 0 \\
\end{array},
$$
$$
\begin{array}{c|rr|cc}
g \mapsto b & 1 & b & (\xi_i \downarrow | \rho_1 +\rho_2) & (\xi_i \downarrow | \rho_2) \\
\hline 
\xi_1 	       \downarrow & 1 & 1 & 1 & 0 \\ 
(\xi_2 -\xi_1) \downarrow & 0 & -2 & 0 & 1 \\
(\xi_3 -\xi_1) \downarrow & 0 & 0 & 0 & 0 \\
(\xi_4 -\xi_1) \downarrow & 0 & -2 & 0 & 1 \\
\end{array},
$$
$$
\begin{array}{c|rr|cc}
g \mapsto ab & 1 & ab & (\xi_i \downarrow | \rho_1 +\rho_2) & (\xi_i \downarrow | \rho_2) \\
\hline 
\xi_1	       \downarrow & 1 & 1 & 1 & 0 \\ 
(\xi_2 -\xi_1) \downarrow & 0 & 0 & 0 & 0 \\
(\xi_3 -\xi_1) \downarrow & 0 & -2 & 0 & 1 \\
(\xi_4 -\xi_1) \downarrow & 0 & -2 & 0 & 1 \\
\end{array}.$$
\item Any inclusion $\Z/2 \hookrightarrow \Afour$ maps the generator of $\Z/2$
to the only conjugacy class of elements of order 2 in $\Afour$.
So it induces the map obtained by restricting to the two cycles $(1)$ and $(12)(34)$.
\begin{center}
\begin{tabular}{c|rr|rr}
	$\Z/2 \hookrightarrow \Afour$& $1$ & $(12)(34)$ 
		& $(\chi_i \downarrow | \rho_1 +\rho_2)$ & $(\chi_i \downarrow | \rho_2)$ \\
       \hline
			       $\chi_1 \downarrow$ & $1$ & $1$ & $1$  & $0$\\
       $(\chi_2 -\chi_1 -\chi_3 -\chi_4) \downarrow$ & $0$ &$-4$ & $0$ & $2$\\
		       $(\chi_3 -\chi_1) \downarrow$ & $0$ & $0$ & $0$ & $0$\\
		       $(\chi_4 -\chi_1) \downarrow$ & $0$ & $0$ & $0$ & $0$
\end{tabular}\end{center}
An inclusion $\Z/3 \hookrightarrow \Afour$ can either map the generator $h$ of $\Z/3$ to the conjugacy class of $(123)$,
or to the conjugacy class of its square $(132)$. So we have the two possibilities
\begin{center}
\begin{tabular}{c|rrr|rrr}
        $h \mapsto (123)$& $1$ & $(123)$ & $(132)$ & $(\sum \chi_i \downarrow | \sum_i \sigma_i)$ 
		& $(\sum \chi_i \downarrow | \sigma_2)$  & $(\sum \chi_i \downarrow | \sigma_3)$\\
       \hline
				 $\chi_1 \downarrow$ & $1$ & $1$ & $1$      & $1$ & $0$ & $0$\\
       $(\chi_2 -\chi_1 -\chi_3 -\chi_4) \downarrow$ & $0$ & $0$ & $0$      & $0$&  $0$ & $0$\\
		       $(\chi_3 -\chi_1) \downarrow$ & $0$ & $j-1$& $j^2-1$ & $0$ & $1$ & $0$\\
		       $(\chi_4 -\chi_1) \downarrow$ & $0$ & $j^2-1$ &$j-1$ & $0$ & $0$& $1$
\end{tabular}\end{center}
and
\begin{center}
\begin{tabular}{c|rrr|rrr}
       $h \mapsto (132)$& $1$ & $(132)$ & $(123)$ & $(\sum \chi_i \downarrow | \sum_i \sigma_i)$ 
		& $(\sum \chi_i \downarrow | \sigma_2)$  & $(\sum \chi_i \downarrow | \sigma_3)$\\
       \hline
				 $\chi_1 \downarrow$ & $1$ & $1$ & $1$      & $1$ & $0$ & $0$\\
       $(\chi_2 -\chi_1 -\chi_3 -\chi_4) \downarrow$ & $0$ & $0$ & $0$      & $0$&  $0$ & $0$\\
		       $(\chi_3 -\chi_1) \downarrow$ & $0$ & $j^2-1$& $j-1$ & $0$ & $0$ & $1$\\
		       $(\chi_4 -\chi_1) \downarrow$ & $0$ & $j-1$ &$j^2-1$ & $0$ & $1$& $0$
\end{tabular}\end{center}
\end{itemize}
So we observe the claimed simultaneous diagonal block splitting.
\end{proof}

\begin{remark} \label{simultaneisation}
It has been pointed out in~\cite{Rahm_homological_torsion}*{observation 50}
that there are only two homeomorphism types of connected components which can occur in the orbit space of the 3-torsion subcomplex.
An observation which facilitates our Bredon homology computations can be made on the connected components
which are homeomorphic to a closed interval.
Let us make a consideration on the preimage of such a component. \\
Concerning our refined cell complex, by~\cite{Rahm_homological_torsion}*{lemma 16}
at each vertex $v$ of stabiliser type $\Afour$, we have two adjacent edges of stabiliser type $\Z/3$
modulo the action of the vertex stabiliser. 
We attribute the cycle $(123) \in \Afour$
to the image of the generator of one of the two stabilisers of type $\Z/3$ of representative edges adjacent to $v$.
Then we choose the preimage of $(123)$ under the inclusion of the other copy of $\Z/3$ to be the generator $h$.
Since we have assumed that $v$ maps to a point on a connected component homeomorphic to a closed interval in the quotient the 
$3$-torsion subcomplex, $v$ is the only point at which $h$ can be related to our first copy of $\Z/3$.
\\
So, we have obtained bases in which all the inclusions $\Z/3 \hookrightarrow \Afour$
induce the first of the two possibilities in the last item of the proof of Lemma~\ref{representation ring splitting}.
\end{remark}

\begin{proof}[Proof of Theorem $\ref{splitting}$]
By the properties of the cell complex $X$ described in Section~\ref{the cell complex},
all edge stabilisers are finite cyclic, all $2$-cell stabilisers are trivial and there are no higher-dimensional cells.
So the Bredon chain complex is concentrated in dimensions $0$, $1$ and $2$,
and has differentials
$$\xymatrix{
0 \ar[r] & \bigoplus\limits_{\sigma \in \thinspace_\Gamma \backslash X^{(2)}} R_\C (\{1\}) \ar[r]^{\Psi_2 } & 
\bigoplus\limits_{\sigma \in \thinspace_\Gamma \backslash X^{(1)}} R_\C (\Z/_{n\Z})  \ar[r]^{\Psi_1} &
\bigoplus\limits_{\sigma \in \thinspace_\Gamma \backslash X^{(0)}} R_\C (\Gamma_\sigma) \ar[r] & 0,
}  $$
where $\Psi_1$, $\Psi_2$ are concentrated in blocks given by the cell stabiliser inclusions.
Those blocks split by Lemma~\ref{representation ring splitting}, 
and therefore we can reorder the rows and columns of $\Psi_1$ 
such that $\Psi_1$ becomes concentrated in three blocks on the diagonal,
\begin{enumerate}
 \item $\Psi_1^{(1)}$, induced by the trivial and regular representations,
 \item $\Psi_1^{(2)}$, induced by the $2$--torsion subgroups and
 \item $\Psi_1^{(3)}$, induced by the $3$--torsion subgroups.
\end{enumerate}
Let us denote by $R_\C^{(1)}(G)$ the subring generated by the regular representation of $G$
when $G$ is a cyclic group, respectively by the trivial representation when 
$G$ is a finite group of different type.
The trivial representation and the regular representation do obviously coincide when $G$ is the trivial group.
As the blocks of $\Psi_2$ are induced exclusively by maps $\{1\} \to \Z/_{n\Z}$,
the differential $\Psi_2$ is concentrated exclusively in blocks induced by regular representations.
Therefore, from the Bredon chain complex we can split off a sequence
$$\xymatrix{
0 \ar[r] & \bigoplus\limits_{\sigma \in \thinspace_\Gamma \backslash X^{(2)}} R_\C (\{1\}) \ar[r]^{\Psi_2 } & 
\bigoplus\limits_{\sigma \in \thinspace_\Gamma \backslash X^{(1)}} R_\C^{(1)} (\Z/_{n\Z})  \ar[r]^{\Psi_1^{(1)}} &
\bigoplus\limits_{\sigma \in \thinspace_\Gamma \backslash X^{(0)}} R_\C^{(1)}  (\Gamma_\sigma) \ar[r] & 0,
}  $$
isomorphic to the cellular chain complex
$$\xymatrix{
0 \ar[r] & C_2({_\Gamma \backslash X}) \ar[r]^{\partial_2 } & 
C_1({_\Gamma \backslash X}) \ar[r]^{\partial_1 } & 
C_0({_\Gamma \backslash X}) \ar[r] &0,}  $$
computing the orbit space homology $\Homol_\bullet(\underbar{B}\Gamma ; \thinspace \Z)$.
The remainder of the Bredon chain complex is then concentrated in dimensions $0$ and $1$,
and splits into a direct summand for $\ell = 2$ and another one for $\ell = 3$,
both of the shape 
$$\xymatrix{
0 \ar[r] & \bigoplus\limits_{\sigma \in \thinspace_\Gamma \backslash X^{(1)}} R_\C^{(\ell)} (\Z/_{n\Z})  \ar[r]^{\Psi_1^{(\ell)}} &
\bigoplus\limits_{\sigma \in \thinspace_\Gamma \backslash X^{(0)}} R_\C^{(\ell)}  (\Gamma_\sigma) \ar[r] & 0,
}  $$
where $R_\C^{(\ell)}$ is defined by the source, respectively the target of $\Psi_1^{(\ell)}$ . 
\end{proof}

\section{Reduction of the torsion subcomplexes} \label{Reduction of the torsion subcomplexes}

\begin{proof}[Proof of Theorem {\rm \ref{2}}.]
 The part of the Bredon complex that is due to $2$-torsion is carried by the $2$-torsion subcomplex of the action of $\Gamma$ on hyperbolic space,
 and has the shape
$$0 \to \bigoplus_{\rm 1-cell \medspace orbits} R_\C^{(2)}(\Z/2) \aTop{\Psi_1^{(2)}}{\longrightarrow}  \bigoplus_{\rm 0-cell}^{\rm orbits} R_\C^{(2)}(\Z/2) \bigoplus_{\rm 0-cell}^{\rm orbits} R_\C^{(2)}(\Kleinfourgroup) \bigoplus_{\rm 0-cell}^{\rm orbits} R_\C^{(2)}(\Sthree) \bigoplus_{\rm 0-cell}^{\rm orbits} R_\C^{(2)}(\Afour) \to 0,$$
where the direct sums run over cells with the respective stabiliser.
At any vertex which is stabilised by a copy of $\Z/2$, we have two adjacent edges with the same stabiliser type,
and the cell stabiliser inclusions $\Z/2 \hookrightarrow \Z/2$ induce isomorphisms on representation rings.
So before splitting the representation rings, in the orbit space we can merge the two edges into one, 
getting rid of the vertex, without changing the homology of the Bredon complex.
The part of the latter that is due to $2$-torsion then has the partially reduced shape
$$0 \to \bigoplus_{\rm 1-cell \medspace orbits}^{\rm (less)} R_\C^{(2)}(\Z/2) \aTop{\Psi_1^{(2)}}{\longrightarrow} \bigoplus_{j = 1}^{o_2} R_\C^{(2)}(\Z/2) \bigoplus_{\rm 0-cell}^{\rm orbits} R_\C^{(2)}(\Kleinfourgroup) \bigoplus_{\rm 0-cell}^{\rm orbits} R_\C^{(2)}(\Sthree) \bigoplus_{\rm 0-cell}^{\rm orbits} R_\C^{(2)}(\Afour) \to 0,$$
where any of the vertices counted by $j$ is the only vertex in a connected component of homeomorphism type $\circlegraph$
in the quotient of the $2$-torsion subcomplex.
The splitting of Section~\ref{Representation ring splitting} implies that the matrix for $\Psi_1^{(2)}$ has blocks
\begin{itemize}
 \item \begin{flushright} $\begin{pmatrix}
        1 \\ 1 \\0
       \end{pmatrix}$
      ,
$\begin{pmatrix}
        0 \\ 1 \\ 1
       \end{pmatrix}$
       or
$\begin{pmatrix}
        1 \\0 \\ 1
       \end{pmatrix}$
       at inclusions $\Z/2 \hookrightarrow \Kleinfourgroup$, depending on which of the three cyclic subgroups of $\Kleinfourgroup$ is hit, 
\end{flushright}
 \item  $(1)$ at inclusions $\Z/2 \hookrightarrow \Sthree$,
 \item $(2)$
       at inclusions $\Z/2 \hookrightarrow \Afour$. 
       \end{itemize}
At any vertex which is stabilised by a copy of $\Sthree$, we have two adjacent edges with stabiliser type $\Z/2$,
and the cell stabiliser inclusions $\Z/2 \hookrightarrow \Sthree$ induce isomorphisms on the $2$-torsion parts of the splitted representation rings.
So we can merge the two edges into one, getting rid of the vertex, without changing the homology of the Bredon complex.
The latter then has the completely reduced shape
$$0 \to \bigoplus_{j=1}^{z_2} R_\C^{(2)}(\Z/2) \aTop{\Psi_1^{(2)}}{\longrightarrow} \bigoplus_{j = 1}^{o_2} R_\C^{(2)}(\Z/2) \bigoplus_{\rm 0-cell}^{\rm orbits} R_\C^{(2)}(\Kleinfourgroup) \bigoplus_{\rm 0-cell}^{\rm orbits} R_\C^{(2)}(\Afour) \to 0,$$
where the vertices of type $\Kleinfourgroup$ are the bifurcation points in the connected components of types $\graphFive$ and $\graphTwo$,
and the vertices of type $\Afour$ are the endpoints in the connected components of types $\graphTwo$ and $\edgegraph$.
Norbert Kr\"amer~\cite{Kraemer}*{Satz 8.3 and Satz 8.4} has shown that the types $\circlegraph$, $\graphFive$, $\graphTwo$ and $\edgegraph$ are all possible homeomorphism types which 
connected components of the $2$-torsion subcomplex of the action of a Bianchi group on hyperbolic space can have.

As the matrix blocks between distinct connected components are zero,
we now only need to compute the homology on a component of type $\circlegraph$,
and take it to the multiplicity $o_2$, 
and on components of types $\edgegraph$, $\graphFive$, $\graphTwo$, and figure out the multiplicities for the latter.
\begin{itemize}
 \item On a connected component of type $\circlegraph$, the map
 $ R_\C^{(2)}(\Z/2) \aTop{\Psi_1^{(2)}|_{\circlegraph}}{ \xrightarrow{\hspace*{1.2cm}} }   R_\C^{(2)}(\Z/2) $
 must be the zero map, because of the opposing signs at edge origin and edge end. Therefore,
 $\Homol_n\left(\Psi_\bullet^{(2)} |_{\circlegraph}\right) \cong \begin{cases}
                                      \Z,& n = 0 \medspace \text{\rm or }1,\\
                                      0,&\text{\rm otherwise}.
                                     \end{cases}
  $
 \item On a connected component of type $\edgegraph$ however, the map
 $$ R_\C^{(2)}(\Z/2) \aTop{\Psi_1^{(2)}|_{\edgegraph}}{ \xrightarrow{\hspace*{1.2cm}} }    R_\C^{(2)}(\Afour) \oplus  R_\C^{(2)}(\Afour) $$
 must be the diagonal map concatenated with multiplication by $2$ and alternating sign, 
 $\Z \to \Z \oplus \Z$,
 $x \mapsto (-2x, 2x)$,
 because of the matrix block $(2)$ at inclusions $\Z/2 \hookrightarrow \Afour$.
 This yields $\Homol_n\left(\Psi_\bullet^{(2)} |_{\edgegraph}\right) \cong \begin{cases}
                                      \Z \oplus \Z/2,& n = 0,\\
                                      0,&\text{\rm otherwise}.
                                     \end{cases}
  $
 \item On a connected component of type $\graphFive$, the three maps coming from the three cyclic subgroups in $\Kleinfourgroup$
do together constitute a matrix block
\begin{center} $ \Z^3 \cong \left(R_\C^{(2)}(\Z/2)\right)^3 
\aTop{\Psi_1^{(2)}|_{\graphFive}}{ \xrightarrow{\hspace*{1.2cm}} }  
\left( R_\C^{(2)}(\Kleinfourgroup) \right)^2 \cong \Z^6, 
\qquad  \Psi_1^{(2)}|_{\graphFive} =$ \scriptsize $\begin{pmatrix}
        -1& -1 &0 \\-1&0&-1 \\0&-1&-1 \\ 1& 1 &0 \\1&0&1 \\0&1&1 
       \end{pmatrix} $\normalsize
\end{center}
This matrix has elementary divisors $2$ of multiplicity one, and $1$ of multiplicity two.
 This yields $\Homol_n\left(\Psi_\bullet^{(2)} |_{\graphFive}\right) \cong \begin{cases}
                                      \Z^3 \oplus \Z/2,& n = 0,\\
                                      0,&\text{\rm otherwise}.
                                     \end{cases}
  $
 \item On a connected component of type $\graphTwo$, the two maps coming from the edge stabilisers
do together constitute a matrix block
\begin{center} $ \Z^2 \cong \left(R_\C^{(2)}(\Z/2)\right)^2 
\aTop{\Psi_1^{(2)}|_{\graphTwo}}{ \xrightarrow{\hspace*{1.2cm}} }  
 R_\C^{(2)}(\Kleinfourgroup) \oplus R_\C^{(2)}(\Afour) \cong \Z^4, 
\qquad  \Psi_1^{(2)}|_{\graphTwo} =$ \scriptsize $\begin{pmatrix}
        1-0 & 1\\1-1&0 \\0-1 &1 \\ 0 &-2 
       \end{pmatrix} $\normalsize
\end{center}
This matrix has elementary divisors $2$ and $1$, of multiplicity one each.
 This yields $\Homol_n\left(\Psi_\bullet^{(2)} |_{\graphTwo}\right) \cong \begin{cases}
                                      \Z^2 \oplus \Z/2,& n = 0,\\
                                      0,&\text{\rm otherwise}.
                                     \end{cases}
  $
\end{itemize}
Finally, we observe that precisely for each connected component except for those of type $\circlegraph$, 
we obtain one summand $\Z/2$ for $\Homol_0\left(\Psi_\bullet^{(2)}\right)$;
and that each such connected component admits two orbits of $\Kleinfourgroup$, 
whether contained in $\Afour$ or not
(these orbits are counted by $d_2$).
And precisely for each orbit of reduced edges (i.e., conjugacy classes of $\Z/2$ in $\Gamma$, counted by $z_2$), we have obtained a summand $\Z$ for $\Homol_0\left(\Psi_\bullet^{(2)}\right)$.
\end{proof}

\begin{proof}[Proof of Theorem {\rm \ref{3}}.]
 The part of the Bredon complex that is due to $3$-torsion is carried by the $3$-torsion subcomplex of the action of $\Gamma$ on hyperbolic space,
 and has the shape
$$0 \to \bigoplus_{\rm 1-cell \medspace orbits} R_\C^{(3)}(\Z/3) \aTop{\Psi_1^{(3)}}{\longrightarrow}  \bigoplus_{\rm 0-cell \medspace orbits} R_\C^{(3)}(\Z/3) \bigoplus_{\rm 0-cell \medspace orbits} R_\C^{(3)}(\Sthree) \bigoplus_{\rm 0-cell \medspace orbits} R_\C^{(3)}(\Afour) \to 0,$$
where the direct sums run over cells with the respective stabiliser.
At any vertex which is stabilised by a copy of $\Z/3$, we have two adjacent edges with the same stabiliser type,
and the cell stabiliser inclusions $\Z/3 \hookrightarrow \Z/3$ induce isomorphisms on representation rings.
So before splitting the representation rings, in the orbit space we can merge the two edges into one, 
getting rid of the vertex, without changing the homology of the Bredon complex.
The part of the latter that is due to $3$-torsion then has the partially reduced shape
$$0 \to \bigoplus_{\rm 1-cell \medspace orbits}^{\rm (less)} R_\C^{(3)}(\Z/3) \aTop{\Psi_1^{(3)}}{\longrightarrow} \bigoplus_{j = 1}^{o_3} R_\C^{(3)}(\Z/3)  \bigoplus_{\rm 0-cell \medspace orbits} R_\C^{(3)}(\Sthree) \bigoplus_{\rm 0-cell \medspace orbits} R_\C^{(3)}(\Afour) \to 0,$$
where any of the vertices counted by $j$ is the only vertex in a connected component of homeomorphism type $\circlegraph$
in the quotient of the $3$-torsion subcomplex.
The splitting of Section~\ref{Representation ring splitting} implies that the matrix for $\Psi_1^{(3)}$ has blocks
\begin{itemize}
 \item  $(1, 1)$ at inclusions $\Z/3 \hookrightarrow \Sthree$,
 \item $\begin{pmatrix}
        1 & 0 \\ 0 & 1
       \end{pmatrix}$
       or
$\begin{pmatrix}
        0 & 1 \\ 1 & 0
       \end{pmatrix}$
       at inclusions $\Z/3 \hookrightarrow \Afour$ 
       (by Remark~\ref{simultaneisation}, we can choose bases such that we exclusively get the first of these two blocks, if we desire),
\end{itemize}
At any vertex which is stabilised by a copy of $\Afour$, we have two adjacent edges with stabiliser type $\Z/3$,
and the cell stabiliser inclusions $\Z/3 \hookrightarrow \Afour$ induce isomorphisms on the $3$-torsion parts of the splitted representation rings.
So we can merge the two edges into one, getting rid of the vertex, without changing the homology of the Bredon complex.
The latter then has the completely reduced shape
$$0 \to \bigoplus_{j = 1}^{o_3 +\iota_3} R_\C^{(3)}(\Z/3) \aTop{\Psi_1^{(3)}}{\longrightarrow}    \bigoplus_{j = 1}^{o_3} R_\C^{(3)}(\Z/3) \bigoplus_{j = 1}^{2\iota_3} R_\C^{(3)}(\Sthree) \to 0,$$
where the vertices of type $\Sthree$ are the exclusive vertices of connected components of type $\edgegraph$.
As the matrix blocks between distinct connected components are zero, we now only need to compute the homology on a component of type $\circlegraph$,
and take it to the multiplicity $o_3$, and on a component of type $\edgegraph$,
and take it to the multiplicity $\iota_3$.
\begin{itemize}
 \item On a connected component of type $\circlegraph$, the map
 $ R_\C^{(3)}(\Z/3) \aTop{\Psi_1^{(3)}|_{\circlegraph}}{\longrightarrow}    R_\C^{(3)}(\Z/3) $
 must be the zero map, because of the opposing signs at edge origin and edge end. Therefore,
 $\Homol_n\left(\Psi_\bullet^{(3)} |_{\circlegraph}\right) \cong \begin{cases}
                                      \Z^{2},& n = 0 \medspace \text{\rm or }1,\\
                                      0,&\text{\rm otherwise}.
                                     \end{cases}
  $
 \item On a connected component of type $\edgegraph$ however, the map
 $$ R_\C^{(3)}(\Z/3) \aTop{\Psi_1^{(3)}|_{\edgegraph}}{\longrightarrow}    R_\C^{(3)}(\Sthree) \oplus  R_\C^{(3)}(\Sthree) $$
 must be the the map
 $\Z^2 \to \Z \oplus \Z$,
 $(x,y) \mapsto (-x-y, x+y)$,
 because of the matrix block $(1, 1)$ at inclusions $\Z/3 \hookrightarrow \Sthree$.
 This yields $\Homol_n\left(\Psi_\bullet^{(3)} |_{\edgegraph}\right) \cong \begin{cases}
                                      \Z,& n = 0 \medspace \text{\rm or }1,\\
                                      0,&\text{\rm otherwise}.
                                     \end{cases}
  $
\end{itemize}
Counting the connected components yields the claimed multiplicities.
\end{proof}

\section{Representation ring splitting in a more general setting} \label{general}

Representation ring splitting, as introduced in this paper, relies on the groups under study admitting a nice system of subgroups.
Without this prerequisite, we can still make the statements in the present section,
but this does not essentially improve on what has already been obtained by L\"uck and Oliver~\cite{LueckOliver} 
based on the split coefficient systems of S\l{}ominska~\cite{Slominska}.

First, we want to sharpen the following lemma of~\cite{MislinValette}.
This will allow us to apply representation ring splitting to determine the free part of the equivariant
 $K$-homology of any group with vanishing geometric torsion dimension, 
for instance the Hilbert modular groups and the Fuchsian groups.

\begin{lemma}[\cite{MislinValette}] \label{numberOfConjugacyClasses}
 Let $G$ be an arbitrary group and write ${\rm FC}(G)$ for the set of conjugacy classes of elements of finite order in $G$.
Then there is an isomorphism
$$\Homol^\mathfrak{Fin}_0(G; R_\C) \otimes_\Z \C \cong \C[{\rm FC}(G)].$$
\end{lemma}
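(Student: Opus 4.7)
The plan is to use character theory to convert the complexified zero-th Bredon homology into a colimit of class-function spaces, and then identify the resulting vector space with $\C[{\rm FC}(G)]$.

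First, I would identify $\Homol^\mathfrak{Fin}_0(G; R_\C)$ with the colimit $\varinjlim_H R_\C(H)$ over the orbit category restricted to finite subgroups $H \leq G$, where the transition maps are induction for inclusions $H \hookrightarrow H'$ together with the obvious isomorphisms coming from $G$-conjugation. This is the standard interpretation of $H_0$ of a Bredon chain complex on a model for $\underbar{\rm E}G$ as the coinvariants of the coefficient system. Since $\C$ is flat over $\Z$, tensoring with $\C$ commutes with the colimit, and the character isomorphism $R_\C(H) \otimes_\Z \C \cong \C[{\rm Conj}(H)]$ reduces the claim to exhibiting an isomorphism $\varinjlim_H \C[{\rm Conj}(H)] \cong \C[{\rm FC}(G)]$.

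Next, I would construct a section $s \colon \C[{\rm FC}(G)] \to \varinjlim_H \C[{\rm Conj}(H)]$ by sending a class $[g] \in {\rm FC}(G)$ to the image in the colimit of the indicator class function $\delta_{[g]} \in \C[{\rm Conj}(\langle g \rangle)]$ on the cyclic subgroup generated by $g$; the $G$-conjugation equivalences in the colimit ensure this is independent of the choice of representative. Surjectivity of $s$ is immediate: every class function on a finite subgroup $H$ is a $\C$-linear combination of indicators $\delta_{[h]_H}$, and by the Frobenius formula for induction of characters each $\delta_{[h]_H}$ is a nonzero scalar multiple of ${\rm ind}_{\langle h \rangle}^{H}(\delta_{[h]_{\langle h \rangle}})$, and therefore lies in the image of $s$ after passing to the colimit.

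For injectivity, I would construct a retraction $r \colon \varinjlim_H \C[{\rm Conj}(H)] \to \C[{\rm FC}(G)]$ by the normalised push-forward $r(\delta_{[h]_H}) = \frac{1}{|C_H(h)|} \delta_{[h]_G}$. A direct computation with the Frobenius formula
$$({\rm ind}_H^{H'}\chi)(g)=\frac{1}{|H|}\sum_{x \in H',\, xgx^{-1}\in H}\chi(xgx^{-1})$$
shows ${\rm ind}_H^{H'}(\delta_{[h]_H}) = \frac{|C_{H'}(h)|}{|C_H(h)|}\, \delta_{[h]_{H'}}$, so the scalar $|C_{H'}(h)|/|C_H(h)|$ is precisely absorbed by the denominator in $r$; this makes $r$ well-defined on the colimit, and $r \circ s$ equals the invertible scalar map $\delta_{[g]} \mapsto \frac{1}{|\langle g \rangle|}\delta_{[g]}$. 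The main obstacle I anticipate is exactly this bookkeeping: one must pin down the correct normalisation in $r$ so that the scaling constants in Frobenius's formula cancel uniformly, and then verify that the identifications imposed by the orbit category do not introduce further nontrivial relations between indicators at distinct $G$-conjugacy classes.
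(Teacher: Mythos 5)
The paper does not prove this lemma---it is imported verbatim from Mislin--Valette \cite{MislinValette}---so there is no internal proof to compare against; your argument is correct and is essentially the standard one from that reference. You identify $\Homol^\mathfrak{Fin}_0(G;R_\C)$ with $\varinjlim_H R_\C(H)$ over the orbit category of finite subgroups, pass to class functions after tensoring with $\C$, and use the Frobenius computation $\mathrm{ind}_H^{H'}(\delta_{[h]_H})=\frac{|C_{H'}(h)|}{|C_H(h)|}\,\delta_{[h]_{H'}}$ to show that the normalised push-forward $\delta_{[h]_H}\mapsto |C_H(h)|^{-1}\delta_{[h]_G}$ descends to the colimit and inverts, up to the nonzero scalars $|\langle g\rangle|^{-1}$, the section built from cyclic subgroups; together with the surjectivity check via $\mathrm{ind}_{\langle h\rangle}^{H}$, this correctly establishes the isomorphism, and the retraction also disposes of your worry about extra relations between distinct $G$-classes.
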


Let $\underbar{\rm E}G^{\rm sing}$ denote the singular part of the classifying space
$\underbar{\rm E}G$ for proper actions of a group $G$, 
namely the subcomplex consisiting of all points in $\underbar{\rm E}G$ with non-trivial stabiliser. 
Mislin~\cite{MislinValette} indicates that up to $G$-homotopy, $\underbar{\rm E}G^{\rm sing}$ is uniquely determined by $G$.
Hence, $\dim \underbar{\rm E}G^{\rm sing}$ can be defined as the minimal dimension of $\underbar{\rm E}G^{\rm sing}$
within its $G$-homotopy type.

Mislin~\cite{MislinValette} shows that for any group $G$, there is a natural map
\begin{equation} \label{natural map}
  \Homol_q^\mathfrak{Fin}( \underbar{\rm E}G ; \thinspace R_\C) 
\to \Homol_q( \underbar{\rm B}G ; \thinspace \Z),
\end{equation}
which is an isomorphism in dimensions $q> \dim \underbar{\rm E}G^{\rm sing}+1$,
\\
and injective in dimension $q = \dim \underbar{\rm E}G^{\rm sing} +1$.
In the special case of vanishing geometric torsion dimension,
we can sharpen this statement together with Lemma~\ref{numberOfConjugacyClasses} as follows.

\begin{proposition}
 For any group $\Gamma$ with $\dim \underbar{\rm E}\Gamma^{\rm sing} = 0$, we have
$\Homol_q^\mathfrak{Fin}( \underbar{\rm E}\Gamma ; \thinspace R_\C) 
\cong \Homol_q( \underbar{\rm B}\Gamma ; \thinspace \Z)$
in degrees $q > 0$,
and $\Homol_0^\mathfrak{Fin}( \underbar{\rm E}\Gamma ; \thinspace R_\C) 
\cong \Z[{\rm FC}(\Gamma)]$.
\end{proposition}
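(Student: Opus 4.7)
The plan is to exploit the hypothesis to obtain a simple form of the representation ring splitting of Section~\ref{Representation ring splitting}. First I would choose a $\Gamma$-CW model $X$ of $\underbar{\rm E}\Gamma$ in which every cell of positive dimension has trivial stabiliser; this is possible since $\dim \underbar{\rm E}\Gamma^{\rm sing} = 0$ forces any positive-dimensional cell to lie outside the singular subcomplex. Consequently the Bredon chain complex $C_\bullet^{\mathrm{Br}}$ satisfies $C_n^{\mathrm{Br}} \cong C_n(\underbar{\rm B}\Gamma; \thinspace \Z)$ for $n \geq 1$, while $C_0^{\mathrm{Br}} = \bigoplus_v R_\C(\Gamma_v)$ with $v$ ranging over vertex orbits. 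The differentials $\Psi_n$ in degrees $n \geq 2$ coincide with the ordinary cellular boundary maps, and $\Psi_1$ sends each edge generator to the signed sum $\mathrm{Ind}_{\{1\}}^{\Gamma_{v_1}}[\mathrm{triv}] - \mathrm{Ind}_{\{1\}}^{\Gamma_{v_0}}[\mathrm{triv}]$, which equals $[\mathrm{reg}_{v_1}] - [\mathrm{reg}_{v_0}]$ by the standard induction formula for the trivial representation from the trivial subgroup.

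Next, at each vertex orbit I would carry out an integral change of basis of determinant one on $R_\C(\Gamma_v)$, replacing the irreducible-character basis $\{[\mathrm{triv}], \rho_2, \ldots, \rho_k\}$ by $\{[\mathrm{reg}_v], \rho_2, \ldots, \rho_k\}$, thereby producing a direct-summand decomposition $R_\C(\Gamma_v) = \Z \cdot [\mathrm{reg}_v] \oplus R'_\C(\Gamma_v)$. By the computation above, the image of $\Psi_1$ lies entirely in the $\Z \cdot [\mathrm{reg}_v]$-summands, and the higher differentials stay in the corresponding trivial-representation summands. The Bredon chain complex therefore decomposes as a direct sum of subcomplexes $C_\bullet^{\mathrm{Br}} = D_\bullet \oplus Z_\bullet$, where $D_\bullet$ is isomorphic to the cellular chain complex $C_\bullet(\underbar{\rm B}\Gamma; \thinspace \Z)$ of the orbit space, while $Z_\bullet$ is concentrated in degree zero with $Z_0 = \bigoplus_v R'_\C(\Gamma_v)$.

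Passing to homology yields $\Homol_q^\mathfrak{Fin}(\underbar{\rm E}\Gamma; \thinspace R_\C) \cong \Homol_q(\underbar{\rm B}\Gamma; \thinspace \Z)$ for all $q \geq 1$, and in degree zero $\Homol_0^\mathfrak{Fin}(\underbar{\rm E}\Gamma; \thinspace R_\C) \cong \Homol_0(\underbar{\rm B}\Gamma; \thinspace \Z) \oplus \bigoplus_v R'_\C(\Gamma_v)$, which is free abelian. Applying Lemma~\ref{numberOfConjugacyClasses} to identify its complex rank with $|\mathrm{FC}(\Gamma)|$, we conclude that $\Homol_0^\mathfrak{Fin}(\underbar{\rm E}\Gamma; \thinspace R_\C)$ is free abelian of rank $|\mathrm{FC}(\Gamma)|$, hence isomorphic as an abelian group to $\Z[\mathrm{FC}(\Gamma)]$.

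The only point in the argument that really needs checking is the observation that $\Psi_1$ takes its values in the regular-representation summand at each vertex; this is an immediate consequence of the formula $\mathrm{Ind}_{\{1\}}^{G}[\mathrm{triv}] = [\mathrm{reg}_G]$. The hard part of Section~\ref{Representation ring splitting} --- classifying all morphisms of representation rings induced by inclusions amongst the finite vertex stabilisers --- does not arise here, because no edge of $X$ carries non-trivial torsion, so the cyclic-to-nontrivial-stabiliser inclusions that required Klein's classification in the Bianchi setting never occur.
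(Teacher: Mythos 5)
Your proof is correct and follows essentially the same route as the paper: split off at each vertex orbit a rank-one summand of $R_\C(\Gamma_v)$ absorbing the image of $\Psi_1$, identify the resulting subcomplex with $C_\bullet(\underbar{\rm B}\Gamma;\Z)$, and note that the complement sits in degree zero. Two small differences are worth recording. You place the rank-one summand on the regular representation $[\mathrm{reg}_v]=\mathrm{Ind}_{\{1\}}^{\Gamma_v}[\mathrm{triv}]$, which is the honest image of $\Psi_1$, whereas the paper keeps the trivial character and works with the basis $\{\xi_1,\ \xi_i-\xi_i(1)\xi_1\}$; the latter is exactly the dual basis, under the character inner product, of your $\{[\mathrm{reg}_v],\rho_2,\dots,\rho_k\}$, so the two encode the same splitting — yours in primal coordinates, the paper's in the Frobenius-reciprocity matrix convention used throughout Section~\ref{Representation ring splitting}. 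Also, your direct-sum decomposition $C_\bullet^{\mathrm{Br}}=D_\bullet\oplus Z_\bullet$ yields the isomorphisms in all degrees $q\geq 1$ at once, while the paper only extracts a monomorphism in degree $1$ from its splitting and then closes the argument with Mislin's natural map~(\ref{natural map}), injective in degree $1$ and an isomorphism above; your finish is the more self-contained of the two. Finally, the appeal to Lemma~\ref{numberOfConjugacyClasses} to identify the degree-zero rank is legitimate precisely because you have first shown $\Homol_0^\mathfrak{Fin}(\underbar{\rm E}\Gamma;\thinspace R_\C)$ to be free abelian, which matches the paper's implicit identification of $\bigoplus_v R'_\C(\Gamma_v)$ with $\Z[{\rm FC}(\Gamma)\setminus\{1\}]$.
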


\begin{proof}
 
Consider a classifying  space
$\underbar{\rm E}\Gamma$ with zero-dimensional $\underbar{\rm E}\Gamma^{\rm sing}$.
Here, the Bredon complex of $\underbar{\rm E}\Gamma$ is reduced in positive degrees to the cellular chain complex of
 $\underbar{\rm B}\Gamma$. 
For any stabiliser group of a vertex in $\underbar{\rm E}\Gamma^{\rm sing}$, with character table $\{\xi_1, \hdots, \xi_n\}$,
 where $\xi_1$ is the trivial character, we choose the basis
 $$\{\xi_1, \xi_2 -\xi_2(1)\xi_1,\hdots, \xi_n -\xi_n(1)\xi_1\}$$
 for its complex representation ring.
Then clearly any morphism induced by inclusions of the trivial edge stabilisers,
 induces a map with image contained in the submodule generated by $\xi_1$ in the representation ring.
So the Bredon complex splits in degree $0$ into a direct sum of the submodules generated by the bases
 $\{\xi_2 -\xi_2(1)\xi_1,\hdots, \xi_n -\xi_n(1)\xi_1\}$ at each vertex of $\underbar{\rm B}\Gamma^{\rm sing}$,
 and the image of the trivial characters of the edges.
The latter image is isomorphic to the image of the boundary map from $1$-cells to $0$-cells 
in~$\underbar{\rm B}\Gamma$,  so we obtain
\begin{itemize}
 \item a monomorphism $\Homol_1( \underbar{\rm B}\Gamma ; \Z) 
\hookrightarrow \Homol_1^\mathfrak{Fin}( \underbar{\rm E}\Gamma ; \thinspace R_\C) $

\medskip

\item and an isomorphism $\Homol_0^\mathfrak{Fin}( \underbar{\rm E}\Gamma ; \thinspace R_\C) 
\cong \Homol_0( \underbar{\rm B}\Gamma ; \Z)  \oplus \Z[{\rm FC}(\Gamma) \setminus \{1\}]$.
\end{itemize}
Using Mislin's map~(\ref{natural map}), we obtain the claimed result.
\end{proof}

For $\Gamma$ a Hilbert modular group or orientable Fuchsian group, $\dim \underbar{\rm E}\Gamma^{\rm sing} = 0$,
so we can apply the above proposition.

\section{Recalls on the assembly map} \label{The assembly map}
We will only outline the meaning of this assembly map.
For any discrete group $G$, we define its reduced $C^*$-algebra, $C^*_r(G)$, and the $K$-theory of the latter as in~\cite{Valette}.
According to Georges Skandalis, the algebras $C^*_r(G)$ are amongst the most important and natural examples of $C^*$-algebras.
It is difficult to compute their $K$-theory directly, 
so we use a homomorphism constructed by Paul Baum and Alain Connes~\cite{BaumConnesHigson}, 
$$   \mu_i : K_i^G(\underbar{E}G) \longrightarrow K_i(C_r^*(G)), \quad \quad
i \in \N \cup \{0\}, $$
called the (analytical) assembly map.
A model for the classifying space for proper actions,
written $\underbar{E}G$, is in the case of the Bianchi groups given by hyperbolic three-space
(being isomorphic to their associated symmetric space), 
the stabilisers of their action on it being finite.
The Baum--Connes conjecture now states that the assembly map is an isomorphism.
The conjecture can be stated more generally, see~\cite{BaumConnesHigson}.

The Baum--Connes conjecture implies several important conjectures in topology, geometry, algebra and functional analysis.
Groups for which the assembly map is surjective verify the Kaplansky---Kadison conjecture on the idempotents.
Groups for which it is injective satisfy the strong Novikov conjecture and one sense of the Gromov---Lawson---Rosenberg conjecture,
namely the sense predicting the vanishing of the higher $\hat{A}$-genera
(see the book of Mislin and Valette~\cite{MislinValette} for details).

\subsection*{The assembly map for the Bianchi groups.} 
Julg and Kasparov~\cite{JulgKasparov} have shown that the Baum--Connes conjecture is verified for all the discrete subgroups of SO$(n,1)$
and SU$(n,1)$. The Lobachevski model of hyperbolic three-space gives a natural identification of its orientation preserving  isometries,
with matrices in $\PSO(3,1)$.
So especially, the assembly map is an isomorphism for the Bianchi groups;
and we have obtained the isomorphism type of $K_i(C_r^*(\Gamma))$.

An alternative way to check that the Baum--Connes conjecture is verified for the Bianchi groups,
is using ``a-T-menability'' in the sense of Gromov, 
also called the Haagerup property~\cite{CherixCowlingJolissaintJulgValette}.
Cherix, Martin and Valette prove in~\cite{CherixMartinValette} that among other groups,
the Bianchi groups admit a proper action on a space ``with measured walls''.
In~\cite{CherixCowlingJolissaintJulgValette}, Haglund, Paulin and Valette show that groups with such an action have the Haagerup property.
Finally Higson and Kasparov~\cite{HigsonKasparov} prove that the latter property implies the bijectivity of several assembly maps, 
and in particular the Baum--Connes conjecture.

The assembly map for the Bianchi groups is more deeply studied in~\cite{Fuchs}.

\section{Appendix} \label{the appendix}
On the machine~\cite{BianchiGP}, we obtain the following results:

\bigskip

\begin{tabular}{c|c|c|c|c|c|c}
$m$ &  \begin{tabular}{c} class \\ no. \end{tabular} &
  $\beta_1$ & 
\begin{tabular}{c} {2-torsion sub-} \\ {complex, reduced}  \end{tabular} &
\begin{tabular}{c} {3-torsion sub-} \\ {complex, reduced}  \end{tabular} &
$\Homol_0^\mathfrak{Fin}$ & $\Homol_1^\mathfrak{Fin}$
\\ 
\hline & & & & & &
 \\
7   &  1 & 1 & \circlegraph & \edgegraph & $ \Z^3 $ & $\Z^2 \oplus \Betti $ \\
2   &  1 & 1 & \graphTwo \qquad \quad & \circlegraph & $\Z^5 \oplus \Z/2 $ & $ \Z^2 \oplus \Betti $ \\
11  &  1 & 1 & \edgegraph & \circlegraph & $ \Z^4 \oplus \Z/2 $ & $ \Z^2 \oplus \Betti$ \\
19  &  1 & 1 & \edgegraph & \edgegraph & $ \Z^3 \oplus \Z/2 $ & $ \Z \oplus \Betti $ \\
15  &  2 & 2 & \circlegraph & \circlegraph & $ \Z^4 $ & $ \Z^3 \oplus \Betti $ \\
5   &  2 & 2 & \graphFive & \circlegraph & $ \Z^6 \oplus \Z/2 $ & $ \Z^2 \oplus \Betti $ \\
6   &  2 & 2 & \circlegraph \edgegraph & \circlegraph & $ \Z^5 \oplus \Z/2 $ & $ \Z^3 \oplus \Betti $ \\
43  &  1 & 2 & \edgegraph & \edgegraph & $ \Z^3 \oplus \Z/2 $ & $ \Z \oplus \Betti $\\
35  &  2 & 3 & \circlegraph & \circlegraph & $\Z^4 $ & $ \Z^3 \oplus \Betti$ \\
10  &  2 & 3 & \graphFive & \circlegraph & $ \Z^6 \oplus \Z/2 $ & $ \Z^2 \oplus \Betti $ \\
51  &  2 & 3 & \edgegraph \edgegraph & \circlegraph & $ \Z^5 \oplus \Z/2 \oplus \Z/2 $ & $ \Z^2 \oplus \Betti$ \\
13  &  2 & 3 & \graphFive & \edgegraph \edgegraph & $\Z^6 \oplus \Z/2 $ & $\Z^2 \oplus \Betti $ \\
67  &  1 & 3 & \edgegraph & \edgegraph & $ \Z^3 \oplus \Z/2 $ & $ \Z \oplus \Betti $ \\
22  &  2 & 5 &  \circlegraph \edgegraph & \circlegraph & $\Z^5 \oplus \Z/2 $ & $\Z^3 \oplus \Betti $ \\
91  &  2 & 5 & \circlegraph & \edgegraph \edgegraph & $\Z^4 $ & $\Z^3 \oplus \Betti $ \\
115 &  2 & 7  & \circlegraph & \circlegraph & $ \Z^4 $ & $ \Z^3 \oplus \Betti $ \\
123 & 2 & 7  & \edgegraph \edgegraph & \circlegraph & $ \Z^5 \oplus \Z/2 \oplus \Z/2 $ & $ \Z^2 \oplus \Betti$ \\
163 & 1 & 7 & \edgegraph & \edgegraph & $ \Z^3 \oplus \Z/2 $ & $ \Z \oplus \Betti $ \\
37  & 2 & 8  & \graphFive \circlegraph \circlegraph & \edgegraph \edgegraph & $ \Z^8 \oplus \Z/2 $ & $ \Z^4 \oplus \Betti $ \\
187 & 2 & 9 & \edgegraph \edgegraph & \circlegraph  & $ \Z^5 \oplus \Z/2 \oplus \Z/2 $ & $ \Z^2 \oplus \Betti$ \\
58  & 2 & 12  & \graphFive & \circlegraph & $\Z^6 \oplus \Z/2 $ & $ \Z^2 \oplus \Betti$ \\
235 & 2 & 13  &  \circlegraph \circlegraph \circlegraph & \circlegraph & $ \Z^6 $ & $ \Z^5 \oplus \Betti$ \\
267 & 2 & 15 & \edgegraph \edgegraph & \circlegraph & $\Z^5 \oplus \Z/2 \oplus \Z/2 $ & $ \Z^2 \oplus \Betti$ \\
403 & 2 & 19  & \circlegraph & \edgegraph \edgegraph & $ \Z^4 $ & $ \Z^3 \oplus \Betti$ \\
427 & 2 & 21  & \circlegraph \circlegraph \circlegraph & \edgegraph \edgegraph & $ \Z^6 $ & $ \Z^5 \oplus \Betti$ \\
\end{tabular}

\begin{bibdiv}
\begin{biblist}
\bib{BaumConnesHigson}{article}{
   author={Baum, Paul},
   author={Connes, Alain},
   author={Higson, Nigel},
   title={Classifying space for proper actions and $K$-theory of group $C^\ast$-algebras},
   conference={
      title={$C^\ast$-algebras: 1943--1993},
      address={San Antonio, TX},
      date={1993},
   },
   book={
      series={Contemp. Math.},
      volume={167},
      publisher={Amer. Math. Soc.},
      place={Providence, RI},
   },
   date={1994},
   pages={240--291},
   review={\MR{1292018 (96c:46070)}},
}
\bib{Brown}{book}{
   author={Brown, Kenneth S.},
   title={Cohomology of groups},
   series={Graduate Texts in Mathematics},
   volume={87},
   note={Corrected reprint of the 1982 original},
   publisher={Springer-Verlag},
   place={New York},
   date={1994},
   pages={x+306},
   isbn={0-387-90688-6},
   review={\MR{1324339 (96a:20072)}},
}
\bib{Bianchi1892}{article}{
   author={Bianchi, Luigi},
   title={Sui gruppi di sostituzioni lineari con coefficienti appartenenti a corpi quadratici immaginar\^{\i}},
   language={Italian},
   journal={Math. Ann.},
   volume={40},
   date={1892},
   number={3},
   pages={332--412},
   ISSN={0025-5831},
   review={\MR{1510727}},
   review={JFM 24.0188.02}
}
\bib{CherixCowlingJolissaintJulgValette}{book}{
   author={Cherix, Pierre-Alain},
   author={Cowling, Michael},
   author={Jolissaint, Paul},
   author={Julg, Pierre},
   author={Valette, Alain},
   title={Groups with the Haagerup property},
   series={Progress in Mathematics},
   volume={197},
   note={Gromov's a-T-menability},
   publisher={Birkh\"auser Verlag},
   place={Basel},
   date={2001},
   pages={viii+126},
   isbn={3-7643-6598-6},
   review={\MR{1852148 (2002h:22007)}},
}
\bib{CherixMartinValette}{article}{
   author={Cherix, Pierre-Alain},
   author={Martin, Florian},
   author={Valette, Alain},
   title={Spaces with measured walls, the Haagerup property and property
   (T)},
   journal={Ergodic Theory Dynam. Systems},
   volume={24},
   date={2004},
   number={6},
   pages={1895--1908},
   issn={0143-3857},
   review={\MR{2106770 (2005i:22006)}},
   doi={10.1017/S0143385704000185},
}
\bib{Floege}{article}{
      author={Fl{\"o}ge, Dieter},
       title={Zur {S}truktur der {P}{S}{L}$\sb{2}$ \"uber einigen
  imagin\"ar-quadratischen {Z}ahlringen},
    language={German},
        date={1983},
        ISSN={0025-5874},
     journal={Math. Z.},
      volume={183},
      number={2},
       pages={255\ndash 279},
      review={\MR{704107 (85c:11043)}},
}

\bib{FloegePhD}{thesis}{
      author={Fl{\"o}ge, Dieter},
       title={Dissertation: Zur {S}truktur der {P}{S}{L}$\sb{2}$ \"uber einigen
  imagin\"ar-quadratischen {Z}ahlringen},
        type={Ph.D. Thesis},
    language={German},
        date={1980},
      review={Zbl 0482.20032},
}
\bib{Fuchs}{article}{
   author={Fuchs, Mathias},
   title={Equivariant $K$-homology of Bianchi groups in the case of non-trivial class group},
   journal={M\"unster Journal of Mathematics},
   date={accepted December 9, 2013,\\ \url{http://wwwmath1.uni-muenster.de/mjm/acc/Fuchs.pdf}},
}
\bib{Harder}{article}{
    author={Harder, G\"unter},
    title={A Gauss-Bonnet formula for discrete arithmetically defined groups},
    journal={Ann. Sci. \'Ecole Norm. Sup. (4)},
    volume={4},
    date={1971},
    pages={409--455},
    ISSN={0012-9593},
    review={\MR{0309145 (46 \#8255)}},
    review={Zbl 0232.20088 }
}
\bib{HigsonKasparov}{article}{
   author={Higson, Nigel},
   author={Kasparov, Gennadi},
   title={$E$-theory and $KK$-theory for groups which act properly and
   isometrically on Hilbert space},
   journal={Invent. Math.},
   volume={144},
   date={2001},
   number={1},
   pages={23--74},
   issn={0020-9910},
   review={\MR{1821144 (2002k:19005)}},
   doi={10.1007/s002220000118},
}
\bib{JulgKasparov}{article}{
      author={Julg, Pierre},
      author={Kasparov, Gennadi},
       title={Operator {$K$}-theory for the group {${\rm SU}(n,1)$}},
        date={1995},
        ISSN={0075-4102},
     journal={J. Reine Angew. Math.},
      volume={463},
       pages={99\ndash 152},
      review={\MR{1332908 (96g:19006)}},
}
\bib{binaereFormenMathAnn9}{article}{
      author={Klein, Felix},
       title={Ueber bin\"are {F}ormen mit linearen {T}ransformationen in sich selbst},
        date={1875},
        ISSN={0025-5831},
     journal={Math. Ann.},
      volume={9},
      number={2},
       pages={183\ndash 208},
         url={http://dx.doi.org/10.1007/BF01443373},
      review={\MR{1509857}},
}
\bib{Kraemer}{book}{
   author={Kr\"amer, Norbert},
   title={Imagin\"{a}rquadratische Einbettung von Maximalordnungen rationaler Quaternionenalgebren, und die nichtzyklischen endlichen Untergruppen der Bianchi-Gruppen},
   date={2015 \\ \url{http://hal.archives-ouvertes.fr/hal-00720823/en/}},
   language={German},
   address={preprint},
}
\bib{KraemerDiplom}{book}{
   author={Kr\"amer, Norbert},
   title={Die Konjugationsklassenanzahlen der endlichen Untergruppen in der Norm-Eins-Gruppe von Maxi\-malordnungen in Quaternionenalgebren},
   date={Diplomarbeit, 	Mathematisches Institut, Universit\"at Bonn, 1980.  \url{http://tel.archives-ouvertes.fr/tel-00628809/}},
   language={German},
}
\bib{LORS}{book}{
   author={Lafont, Jean-Francois},
      author={Ortiz, Ivonne},
         author={Rahm, Alexander~D.},
            author={Sanchez-Garcia, Ruben},
   title={Bredon homology and equivariant $K$-homology of hyperbolic Coxeter groups},
   date={work in advanced progress},
}
\bib{LueckOliver}{article}{
   author={L{\"u}ck, Wolfgang},
   author={Oliver, Bob},
   title={Chern characters for the equivariant $K$-theory of proper
   $G$-CW-complexes},
   conference={
      title={Cohomological methods in homotopy theory},
      address={Bellaterra},
      date={1998},
   },
   book={
      series={Progr. Math.},
      volume={196},
      publisher={Birkh\"auser, Basel},
   },
   date={2001},
   pages={217--247},
   review={\MR{1851256 (2002m:55016)}},
}
\bib{Mendoza}{book}{
      author={Mendoza, Eduardo~R.},
       title={Cohomology of {P}{G}{L}$\sb{2}$ over imaginary quadratic
  integers},
      series={Bonner Mathematische Schriften [Bonn Mathematical Publications],
  \textbf{128}},
   publisher={Dissertation, Rheinische Friedrich-Wilhelms-Universit\"at,
  Mathematisches Institut},
      review={\MR{611515 (82g:22012)}},
}
\bib{MislinValette}{collection}{
   author={Mislin, Guido},
   author={Valette, Alain},
   title={Proper group actions and the Baum-Connes conjecture},
   series={Advanced Courses in Mathematics. CRM Barcelona},
   publisher={Birkh\"auser Verlag},
   place={Basel},
   date={2003},
   pages={viii+131},
   isbn={3-7643-0408-1},
   review={\MR{2027168 (2005d:19007)}},
   review={Zbl 1028.46001},
}
\bib{Poincare}{article}{
   author={Poincar{\'e}, Henri},
   title={M\'emoire},
   language={French},
   note={Les groupes klein\'eens},
   journal={Acta Math.},
   volume={3},
   date={1966},
   number={1},
   pages={49--92},
   issn={0001-5962},
   review={\MR{1554613}},
}
\bib{AccessingFarrell}{article}{
   author={Rahm, Alexander~D.},
   title={Accessing the cohomology of discrete groups above their virtual
   cohomological dimension},
   journal={J. Algebra},
   volume={404},
   date={2014},
   pages={152--175},
   issn={0021-8693},
   review={\MR{3177890}},
   doi={10.1016/j.jalgebra.2014.01.025},
}
\bib{Rahm_homological_torsion}{article}{
      author={Rahm, Alexander~D.},
       title={The homological torsion of {${\rm PSL}_2$} of the imaginary quadratic integers},
   JOURNAL = {Trans. Amer. Math. Soc.},
    VOLUME = {365},
      YEAR = {2013},
    NUMBER = {3},
     PAGES = {1603--1635},
      ISSN = {0002-9947},
       DOI = {10.1090/S0002-9947-2012-05690-X},
       URL = {http://dx.doi.org/10.1090/S0002-9947-2012-05690-X},
}
\bib{noteAuxCRAS}{article}{
   author={Rahm, Alexander D.},
   title={Homology and $K$-theory of the Bianchi groups},
   journal={C. R. Math. Acad. Sci. Paris},
   volume={349},
   date={2011},
   number={11-12},
   pages={615--619},
   issn={1631-073X},
   review={\MR{2817377 (2012e:20116)}},
   doi={10.1016/j.crma.2011.05.014},
}
\bib{Higher_torsion}{article}{
      author={Rahm, Alexander~D.},
title = {Higher torsion in the Abelianization of the full Bianchi groups},
journal = {LMS Journal of Computation and Mathematics},
volume = {16},
year = {2013},
issn = {1461-1570},
pages = {344--365},
doi = {10.1112/S1461157013000168},
URL = {http://journals.cambridge.org/article_S1461157013000168},
}
\bib{BianchiGP}{book}{
   author =  {Rahm, Alexander~D.} ,
   title =   {Bianchi.gp},
   address = { Open source program (GNU general public
  license), validated by the CNRS:\\ \url{http://www.projet-plume.org/fiche/bianchigp} \hfill
  Part of the Pari/GP Development Center scripts library, 2010.},
}
\bib{RahmThesis}{book}{
   author =  {Rahm, Alexander~D.} ,
   title =   {(Co)homologies and K-theory of Bianchi groups using computational geometric models},
   address = {PhD thesis, Institut Fourier, Université de Grenoble et Universit\"at G\"ottingen, soutenue le 15 octobre 2010,
               \url{http://tel.archives-ouvertes.fr/tel-00526976/}
	      },
}	
\bib{RahmFuchs}{article}{
      author={Rahm, Alexander~D.},
      author={Fuchs, Mathias},
       title={The integral homology of {${\rm PSL}_2$} of imaginary quadratic
  integers with nontrivial class group},
        date={2011},
        ISSN={0022-4049},
     journal={J. Pure Appl. Algebra},
      volume={215},
      number={6},
       pages={1443\ndash 1472},
         url={http://dx.doi.org/10.1016/j.jpaa.2010.09.005},
      review={\MR{2769243}},
}
\bib{Sanchez-Garcia}{article}{
      author={S{\'a}nchez-Garc{\'{\i}}a, Rub{\'e}n~J.},
       title={Bredon homology and equivariant ${K}$-homology of ${\rm {S}{L}
  }(3,{\mathbb{ {Z} }})$},
        date={2008},
     journal={J. Pure Appl. Algebra},
      volume={212},
      number={5},
       pages={1046\ndash 1059},
      review={\MR{2387584 (2009b:19007)}},
}
\bib{Sanchez-Garcia_Coxeter}{article}{
   author={S{\'a}nchez-Garc{\'{\i}}a, Rub{\'e}n J.},
   title={Equivariant $K$-homology for some Coxeter groups},
   journal={J. Lond. Math. Soc. (2)},
   volume={75},
   date={2007},
   number={3},
   pages={773--790},
   issn={0024-6107},
   review={\MR{2352735 (2009b:19006)}},
   doi={10.1112/jlms/jdm035},
}
 \bib{SchwermerVogtmann}{article}{
    author={Schwermer, Joachim},
    author={Vogtmann, Karen},
    title={The integral homology of ${\rm SL}\sb{2}$ and ${\rm PSL}\sb{2}$ of Euclidean imaginary quadratic integers},
    journal={Comment. Math. Helv.},
    volume={58},
    date={1983},
    number={4},
    pages={573--598},
    ISSN={0010-2571},
    review={\MR{728453 (86d:11046)}},
    review={Zbl 0545.20031 }
 }
 \bib{Serre}{article} {
    AUTHOR = {Serre, Jean-Pierre},
     TITLE = {Le probl\`eme des groupes de congruence pour ${SL}_2$},
   JOURNAL = {Ann. of Math. (2)},
    VOLUME = {92},
      YEAR = {1970},
     PAGES = {489--527},
      ISSN = {0003-486X},
     review = { \MR{ 0272790} (\textbf{42} \#7671)},
     review = { Zbl 0239.20063}
}
\bib{Serre_linear}{book}{
   author={Serre, Jean-Pierre},
   title={Linear representations of finite groups},
   note={Translated from the second French edition by Leonard L. Scott;
   Graduate Texts in Mathematics, Vol. 42},
   publisher={Springer-Verlag},
   place={New York},
   date={1977},
   pages={x+170},
   isbn={0-387-90190-6},
   review={\MR{0450380 (56 \#8675)}},
}
\bib{Slominska}{article}{
   author={S{\l}omi{\'n}ska, Jolanta},
   title={On the equivariant Chern homomorphism},
   language={English, with Russian summary},
   journal={Bull. Acad. Polon. Sci. S\'er. Sci. Math. Astronom. Phys.},
   volume={24},
   date={1976},
   number={10},
   pages={909--913},
   issn={0001-4117},
   review={\MR{0461489 (57 \#1474)}},
}
\bib{Soule}{article}{
   author={Soul{\'e}, Christophe},
   title={The cohomology of ${\rm SL}_{3}({\bf Z})$},
   journal={Topology},
   volume={17},
   date={1978},
   number={1},
   pages={1--22},
   issn={0040-9383},
   review={\MR{0470141 (57 \#9908)}},
}
\bib{Valette}{book}{
   author={Valette, Alain},
   title={Introduction to the Baum-Connes conjecture},
   series={Lectures in Mathematics ETH Z\"urich},
   note={From notes taken by Indira Chatterji;
   With an appendix by Guido Mislin},
   publisher={Birkh\"auser Verlag},
   place={Basel},
   date={2002},
   pages={x+104},
   isbn={3-7643-6706-7},
   review={\MR{1907596 (2003f:58047)}},
}
\bib{Vogtmann}{article}{
   author={Vogtmann, Karen},
   title={Rational homology of Bianchi groups},
   journal={Math. Ann.},
   volume={272},
   date={1985},
   number={3},
   pages={399--419},
   ISSN={0025-5831},
   review={\MR{799670 (87a:22025)}},
   review={Zbl 0545.20031 }
}
\end{biblist}
\end{bibdiv}

\end{document}